% Documento pronto da stampare
%\documentclass[a4paper,12pt,oneside]{report}
\documentclass[a4paper, 12pt]{amsart}

\usepackage{xcolor}

% Imposto la lingua italiana
\usepackage[english]{babel}

% Settaggio dei margini
\topmargin 0cm
\headsep 1cm
\headheight 0.6cm
\textwidth 14.6cm
\textheight 21.8cm
\evensidemargin 1cm
\oddsidemargin 1cm

\linespread{1.1} %interlinea

\usepackage{verbatim} %%cosa per begin comment

% Pacchetti da includere per...
\usepackage{amsthm}

\newtheorem{theorem}{Theorem}[section]  % sec. number + seq. number
\newtheorem{lemma}[theorem]{Lemma}
\newtheorem{corollary}[theorem]{Corollary}
\newtheorem{proposition}[theorem]{Proposition}

\theoremstyle{definition}
\newtheorem{definition}[theorem]{Definition}
\newtheorem{remark}[theorem]{Remark}
\newtheorem{example}[theorem]{Example}
\newtheorem{notation}[theorem]{Notation}
\newtheorem{fact}[theorem]{Fact}

\usepackage{enumerate}
%\renewcommand{\labelenumi}{(\alph{enumi})}

% diagrammi
\usepackage[all,cmtip]{xy}

% simboli matematici
\usepackage{amsmath}
\usepackage{amssymb}
\usepackage{amsfonts}
\usepackage{mathtools}

 % overline short%%per la barra di chiusura più corta

% simboli vari
\usepackage{textcomp}
\usepackage{stmaryrd}
\usepackage[mathscr]{euscript}
\usepackage{bm} %grassetto matematichese
\usepackage{yfonts} %lettere gotiche
\usepackage{blkarray} %matrici etichettate
\usepackage{tikz} %disegni complessi simpliciali
\usetikzlibrary{shapes.misc} %croci al posto dei pallini
\tikzset{cross/.style={cross out, draw=black, minimum size=2*(#1-\pgflinewidth), inner sep=0pt, outer sep=0pt}, cross/.default={1pt}}
\usepackage{cancel} %cancellare cose
\usepackage{nicematrix} %matrici etichettate

%mettere le parentesi quadre di default per le matrici etichettate
\usepackage{etoolbox}
\let\bbordermatrix\bordermatrix
\patchcmd{\bbordermatrix}{8.75}{4.75}{}{}
\patchcmd{\bbordermatrix}{\left(}{\left[}{}{}
\patchcmd{\bbordermatrix}{\right)}{\right]}{}{}

% figure
\usepackage{epsfig}
\usepackage{graphicx}
\usepackage{subfigure}

% intestazioni
\usepackage{fancyhdr}

% encoding dei caratteri
\usepackage[utf8]{inputenc}
\usepackage[T1]{fontenc}

% riferimenti cliccabili nel pdf
\usepackage[pdftex]{hyperref}

\usepackage{array}

% Stile della pagina
\fancyhead[L]{\chaptername \thechapter}
\fancyhead[LO]{\thesection} \fancyhead[RO]{\sectionmark}
\lhead{\slshape Ch. \thechapter}

% Configurazioni varie

% nonumchapter serve per introduzioni, prefazioni, ecc.
% titola come un capitolo ma senza numero

% estensioni e cartella delle immagini (non necessario...)
\DeclareGraphicsExtensions{gif,eps,png,jpeg}
\graphicspath{{./eps/}}

% crea una nuova lunghezza e gli assegna un valore
\newlength{\defbaselineskip}
\setlength{\defbaselineskip}{\baselineskip}

% comando per settare  la variabile \baselineskip come multiplo di \defbaselineskip
\newcommand{\setlinespacing}[1]%
{\setlength{\baselineskip}{#1 \defbaselineskip}}

% permette l'inserimento "rapido" del virgolettato (ad es. per citazioni)

% Settaggio interlinea
\setlinespacing{1.5}

% Impostazioni della pagina
\usepackage{indentfirst}
\cfoot{\thepage}

% New commands
\newcommand{\NN}{\normalfont\mathbb{N}}
\newcommand{\ZZ}{\normalfont\mathbb{Z}}
\newcommand{\QQ}{\normalfont\mathbb{Q}}

\newcommand{\FF}{\normalfont\mathfrak{F}}

\newcommand{\lk}{\textup{lk}}
\newcommand{\cost}{\textup{cost}}
\newcommand{\dm}{\textup{dim}}
\newcommand{\Hr}{\widetilde{H}}

\title{A level initial ideal of the $2$-minors determinantal ideal}

\author[F. Bisio]{Francesco Bisio}
\address[F. Bisio]{Dipartimento di Matematica, Università di Genova, Italy}
\email[F. Bisio]{francesco.bisio@edu.unige.it}

\date{\today}
\keywords{Determinantal Ideals, Level, Square-free Gr\"obner Deformations, Stanley-Reisner Rings, Betti Numbers, Shellable Simplicial Complexes}

%opening

\begin{document}
	
\pagenumbering{arabic}

\maketitle

\begin{abstract}
For $\Bbbk$ a field, let $X$ a $m \times n$ matrix of variables and $S=\Bbbk[X].$ We consider the determinantal ideal $I_2 \subseteq S$ generated by the $2$-minors of $X.$ In this paper we find a suitable monomial order over $S$ such that $I,$ the initial ideal of $I_2$ with respect to that order, is level, namely, it is Cohen-Macaulay and the socle of an Artinian reduction of the $\NN$-graded algebra $S/I$ is concentrated in only one degree. Moreover, we compare the Betti tables of $I_2$ with the tables of its initial ideals. In the last section, we prove the shellability of the simplicial complex naturally associated to $S/I$ in the case $m<n.$
\end{abstract}

\section{Introduction}

Let $J$ be an ideal in a polynomial ring $T$ with coefficients in a field. Often, to prove that $J$ has a certain property, it is useful to introduce a monomial order $<$ over $T$ and test such property on the monomial ideal $\textup{in}_<(J)$. Indeed the following properties are passed from $T/\textup{in}_<(J)$ to $T/J$: being Cohen-Macaulay, level or Gorenstein. It is very natural and interesting to ask if some sort of converse holds, perhaps if we add some hypothesis on the monomial order $<$.\\

Thanks to the work of Conca and Varbaro (\cite{CoVa}) we know that, if $\textup{in}_<(J)$ is square-free, then $T/J$ is Cohen-Macaulay if and only if $T/\textup{in}_<(J)$ is Cohen-Macaulay. The same statement does not hold if we replace Cohen-Macaulay with Gorenstein, so many authors (\cite{At}, \cite{BruRo}, \cite{CoDenWe}, \cite{CoHosTh}, \cite{JoWe}, \cite{PetPySp} \cite{ReiWe}, \cite{SaStWe}, \cite{SoWe}) have studied the following weaker problem: if a Gorenstein ideal $J$ admits a monomial order such that the initial ideal (with respect to that order) is square-free, can we say that there exists a monomial order $<$ such that $\textup{in}_<(J)$ is square-free and Gorenstein?\\
By Conca and Varbaro we know that if $\textup{in}_<(J)$ is square-free then it is Cohen-Macaulay, but in general it is not Gorenstein: however, if the simplicial complex associated with $\textup{in}_<(J)$ by the Stanley-Reisner correspondence is a join between an $r$-simplex (where $r=-a(T/J)-1$) and a sphere, then $\textup{in}_<(J)$ is guaranteed to be Gorenstein (see \cite[Lemma 2.2]{CoHosTh}).\\

In this paper we study the \textbf{level} property. We can ask if the same philosophy holds for level ideals, i.e. if a level ideal $J$ admits a monomial order such that the initial ideal (with respect to that order) is square-free, then there exists a monomial order $<$ such that $\textup{in}_<(J)$ is square-free and level.\\
Studying this problem, one can try to prove (as in the Gorenstein case) that the simplicial complex associated with $\textup{in}_<(J)$ is a join between a simplex and either a ball or a sphere. However, two complications which are not present in the Gorenstein case arise in the level case.
\begin{itemize}
\item There are level Stanley-Reisner rings $R$ such that $-a(R)$ is not the number of the cone points of the simplicial complex associated with $R.$
\item We know that all simplicial spheres are Gorenstein (and therefore level), but not all simplicial balls are level (see Figure \ref{non-level ball}).
\end{itemize}

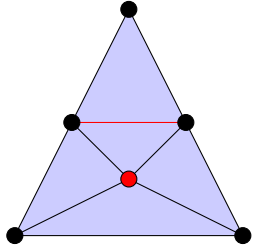
\begin{figure}[h]
\begin{center}
\begin{tikzpicture}[scale=1.5]
 [black,left,red] at (0,0) {};
\node [black,below left,red] at (2,0) {};
\node [black,below right,red] at (1,2) {};
\node [black,right,red] at (1,.5) {};
\node [black,above right,red] at (.5,1) {};
\node [black,above left,red] at (1.5,1) {};
\draw [fill=blue!20] (0,0)--(2,0)--(1,2)--cycle;
\draw (0,0)--(1,.5)--(1.5,1);
\draw [red] (1.5,1)--(.5,1);
\draw (.5,1)--(1,.5)--(2,0);
\draw [fill=black] (0,0) circle (2pt);
\draw [fill=black] (2,0) circle (2pt);
\draw [fill=black] (1,2) circle (2pt);
\draw [fill=red] (1,.5) circle (2pt);
\draw [fill=black] (.5,1) circle (2pt);
\draw [fill=black] (1.5,1) circle (2pt);
\end{tikzpicture}
\caption{A simplicial ball which is not level. As we will see, the minimal generators of the canonical module correspond to the minimal faces that are not contained in the boundary of the ball}
\label{non-level ball}
\end{center}
\end{figure}

Despite these difficulties, in this paper we solve the problem when $J$ is the ideal generated by the $2$-minors of a generic matrix of variables. Indeed we find a monomial order such that the initial ideal of $J$ with respect to that order is square-free and level.\\

Let $n,m \in \NN$, $n \geq m \geq 2.$ We consider the matrix of variables $X \coloneqq \{x_{i,j}\}_{i \in [m]}^{j \in [n]}$, where $[n] \coloneqq \{1,\dots,n\}$, and $S \coloneqq \Bbbk[X]$, with $\Bbbk$ any field. Let $I_2 \coloneqq I_2(X) \subseteq S$ be the ideal generated by the $2$-minors of $X$.\\

In Section \ref{to and simplicial complex} we introduce a specific degrevlex monomial order $<$ over $S$ and the object that we study is the $\NN$-graded $\Bbbk$-algebra $S/I,$ where $I$ is the initial ideal of $I_2$ with respect to $<$. Since $I$ is a square-free monomial ideal of $S,$ we can use the Stanley-Reisner correspondence and consider the simplicial complexes $\Delta_I$ associated with $I$: using this correspondence, we can investigate the algebraic properties of $S/I$ studying the combinatorial properties of $\Delta_I$. The simplicial complex that we actually analyze is $\Delta$, which is the one obtained from $\Delta_I$ after removing its cone points. At the end of the section we show (Proposition \ref{Delta quasimanifold}) that $\Delta$ is a quasimanifold. Moreover, $\Delta$ is a sphere if $m=n$ and a ball if $m<n$.\\

Section \ref{levelness} is the central part of this paper, where we prove that $S/I$ is level, i.e. it is Cohen-Macaulay and its canonical module is generated (as a $\NN$-graded $\Bbbk$-vector space) in a single degree. In order to do that, we use two results of Gr\"abe (\cite{Gr84a} and \cite{Gr84b}) about the canonical module of a Stanley-Reisner ring associated with a quasimanifold. The key is the choice of the proper monomial order $<$. There are several orders such that the initial ideal of $I_2$ is associated to a simplicial complex which, after removing the cone points, turns out to be a ball if $m<n$ and a sphere if $m=n$ (see \cite{CoHosTh}). However, as we have already said, there are balls that are not level.\\
At the end of the section, we study the Betti table of $I_2$ and we compare it with the Betti tables of its initial ideals. We find out that, for $m \geq 3$, $n \geq 4$ and every $\Bbbk$, there is no initial ideal with the same Betti table as $I_2$.\\

In Section \ref{shellablility} we prove the shellability of $\Delta.$ We use methods which are similar to \cite[Section 4]{CoHosTh}.

\section{A Gr\"obner deformation of the determinantal ring} \label{to and simplicial complex}

First of all, let us give the definition of level for a $\NN$-graded Noetherian ring

\begin{definition}
Let $R$ be a Noetherian $\NN$-graded $\Bbbk$-algebra. $R$ is said to be \textbf{level} if these two conditions hold:
\begin{enumerate}[(i)]
\item $R$ is Cohen-Macaulay;
\item the socle of an (equivalently all) homogeneous Artinian reduction of $R$ is concentrated in only one degree.
\end{enumerate}
$I \subseteq R$ an homogeneous ideal is said to be level if $R/I$ is level.
A simplicial complex $\Gamma$ is said to be level (over $\Bbbk$) if its Stanley-Reisner ring $\Bbbk[\Gamma]$ is level. The levelness of $\Gamma$ depends, in general, from the field $\Bbbk$ (so does, for example, the Cohen-Macaulayness).
\end{definition}

\begin{remark}\label{Canonical level}
Notice that a Cohen-Macaulay $\NN$-graded $\Bbbk$-algebra $R$ admits a graded \textbf{canonical module} $\omega_R$ (see \cite[Example 3.6.10 and Proposition 3.6.12]{BruHe93}). The definition above remains equivalent if we require, instead of the second condition, that $\omega_R$ is generated in a single degree as a $\Bbbk$-vector space (see \cite[Proposition 2.1.8(iv)]{GotWa}). More precisely, Goto and Watanabe proved that
$$\omega_R \otimes_R \Bbbk \cong \Bbbk (d_1) \oplus \dots \oplus \Bbbk(d_r)$$
if and only if
$$\textup{Ext}_R^d(\Bbbk,R) \cong \Bbbk(-d_r) \oplus \dots \oplus \Bbbk(-d_1),$$
where $d_1 \leq \dots \leq d_r$ are integers and $d= \textup{dim}(R)$. Since, if $\underline{x}=x_1,\dots,x_d$ is a maximal $R$-regular sequence such that $\textup{deg}(x_i)=b_i,$ putting $b=b_1+\dots+b_d$ we have that
\begin{align*}
\textup{Soc}(R/(\underline{x})R) &\cong \textup{Hom}_{R/(\underline{x})R}(\Bbbk,R/(\underline{x})R) \cong \textup{Ext}_R^d(\Bbbk,R)(-b) \cong \\
&\cong \Bbbk(-d_r-b) \oplus \dots \oplus \Bbbk(-d_1-b),
\end{align*}
our assertion follows.\\
The integer $d_r$ is called the \textbf{a-invariant} of $R.$ By the discussion above, $d_r+b$ is the top non-zero degree of $R/(\underline{x})R$ and $(\omega_R \otimes_R \Bbbk)_{-d_r} \cong (R/(\underline{x})R)_{d_r+b}.$\\
This last observation will help us to prove Lemma \ref{Lemma 1}.
\end{remark}

\begin{definition}
Let us set the monomial order over $S,$ which is the degrevlex order $<$ induced by the following order on the variables: $x_{i,j} > x_{k,l}$ if one among these conditions holds
\begin{enumerate}[(i)]
\item $i \neq j,$ $k \neq l$ and $i<k$;
\item $i \neq j,$ $k \neq l,$ $i=k$ and $j<l$;
\item $i \neq j$ and $k=l$;
\item $i=j$, $k=l$ and $i<k$.
\end{enumerate}
\end{definition}

Basically, to go from the highest to the lowest variable we scroll the rows from the left to the right, but we leave in the last place the variables in the main diagonal, ordering them from the top-left to the bottom-right. We see an example to clarify.

\begin{example}[$m=3, n=4$]
$$X=\begin{bmatrix}
	x_{1,1} & x_{1,2} & x_{1,3} & x_{1,4} \\
	x_{2,1} & x_{2,2} & x_{2,3} & x_{2,4} \\
	x_{3,1} & x_{3,2} & x_{3,3} & x_{3,4}
\end{bmatrix} \rightsquigarrow
\begin{bmatrix}
	3 & 12 & 11 & 10 \\
	9 & 2 & 8 & 7 \\
	6 & 5 & 1 & 4
\end{bmatrix}$$
In this case, the variables are ordered as it follows: $$x_{1,2}>x_{1,3}>x_{1,4}>x_{2,1}>x_{2,3}>x_{2,4}>x_{3,1}>x_{3,2}>x_{3,4}>x_{1,1}>x_{2,2}>x_{3,3}.$$
\end{example}

Since $<$ is a degrevlex order, the set of $2$-minors is a Gröbner basis for $I_2$ with respect to $<$ (see \cite[Proposition 5.3.8]{BruCoRaVa}). This fact implies that $I \coloneqq \textup{in}_<(I_2)$ is generated by the leading terms of the minors and that $I$ is square-free, since there is no squared variable in the minors.\\

We want to prove that $S/I$ is level. Using the Stanley-Reisner correspondence, we call $\Delta_I$ the simplicial complex on $V_I \coloneqq \{(i,j) : i \in [m], j \in [n]\}$ associated with $I.$ Recall that $\Delta_I \coloneqq \{\sigma \subseteq V_I : x_\sigma \notin I\},$ where $x_\sigma \coloneqq \prod_{(i,j) \in \sigma} x_{i,j}.$ One can prove that both $S/I_2$ and $S/I$ are Cohen-Macaulay (see \cite[Theorems 2.4.3 and 4.4.5]{BruCoRaVa}).\\

Notice that, in every minor, at least one of the two monomials has no variable of the main diagonal of $X$ and, since those variables are the "worst" in our degrevlex order, none of them can appear in a leading term of some minor, which are the minimal generators of $I.$ All of this means that the vertices corresponding to the main diagonal variables are in every facet of $\Delta_I,$ i.e. they are cone points for $\Delta_I.$ Conversely, no other vertex $(i,j) \in V_I,$ with $i \neq j$ is a cone point: we consider the minor, up to a sign, $x_{i,j}x_{k,i}-x_{i,i}x_{k,j},$ for some $k \neq i,$ and the leading term $x_{i,j}x_{k,i}$ is a minimal generator of $I;$ so $x_{k,i} \notin I,$ but $x_{i,j}x_{k,i} \in I,$ meaning that $\{(i,j),(k,i)\} \notin \Delta_I,$ but $\{(k,i)\} \in \Delta_I,$ so $(i,j)$ is not a cone point. Summing up, the set of the cone points of $\Delta_I$ is $\textup{Cp} \coloneqq \{(k,k) : k \in[m]\}.$\\

Now we take the simplicial complex obtained starting from $\Delta_I$ and deleting its cone points, that is $\Delta \coloneqq {\Delta_I}_{|_V} = \{\sigma \in \Delta_I : \sigma \subseteq V\},$ where $V \coloneqq V_I \smallsetminus \textup{Cp}.$ The final object of our investigation is $\Bbbk[\Delta],$ the Stanley-Reisner ring of $\Delta.$\\

It is well-known that, in a Noetherian graded ring, going modulo a regular sequence preserves the Cohen-Macaulayness (or non-Cohen-Macaulayness) and it follows from the definition that it holds also for the levelness. It is clear that $\underline{s} \coloneqq x_{1,1},\dots,x_{m,m}$ is a $S/I$-regular sequence, so, since $\Bbbk[\Delta]$ is isomorpihic to $S/(I+(\underline{s})),$ we have that $\Bbbk[\Delta]$ is Cohen-Macaulay and its levelness is equivalent to the levelness of $S/I.$\\

\begin{remark}
In the quadratic case $m=n,$ one can prove (see \cite[Proposition 4.1.9]{BruCoRaVa} and \cite[Theorem 4.13]{CoHosTh}) that $S/I_2$ and $S/I$ are Gorenstein. Recall that a Noetherian $\NN$-graded ring is Gorenstein if it is Cohen-Macaulay and the socle of one (equivalently all) of its Artinian reductions has one generator. It is clear that Gorenstein implies level, so, from now on, we focus on the rectangular case, when $m<n.$
\end{remark}

To study $\Bbbk[\Delta],$ we have to understand how the simplicial complex $\Delta$ looks like; in order to do this, we want to describe how the facets of $\Delta$ are made. Let $\FF(\Delta)$ be the set of the facets of $\Delta$ and let $F \in \FF(\Delta);$ all the variables associated to the vertices in $F$ must not form, pairwise, a leading term of a minor and $F$ has to be maximal respect to this property. Notice that two variables of $X$ that are not in the main diagonal form a leading term of a minor if and only if they are in one of the following situations:

\begin{itemize}
\item in the anti-diagonal of a minor;
\item in the diagonal of a minor and the anti-diagonal of that minor has at least a variable corresponding to a cone point.
\end{itemize}

So if we put the vertices of $\Delta$ in a matrix $M$ in the natural way ($(i,j)$ is in the $i$-th row and in the $j$-th column of $M$), every couple of different vertices $(i,j)$ and $(k,l)$ in $F$ (suppose $k \geq i$) must satify one and only one of the following conditions:

\begin{itemize}
\item $i=k;$
\item $j=l;$
\item $i<k,$ $j<l,$ $i \neq l$ and $k \neq j.$
\end{itemize}

Let us fix some notations now.

\begin{notation}
Let $\sigma$ be a face of $\Delta.$ We define $r^\sigma \coloneqq \{i \in [m] : \exists_{j \in [n]} ((i,j) \in \sigma)\}$ and $c^\sigma \coloneqq \{j \in [n] : \exists_{i \in [m]} ((i,j) \in \sigma)\}$ respectively the rows and the columns of the vertices of $\sigma.$ Then, we call $M^{\sigma}$ the smallest submatrix of $M$ that contains $\sigma$, i.e. the submatrix with rows in $r^\sigma$ and columns in $c^\sigma.$
\end{notation}

Getting back to our argument, we have that $r^F \cap c^F = \emptyset$ (by what we said before) and $r^F \cup c^F = [n]$ (by the maximality of $F$ in $\Delta$). Again, the maximality of $F$ implies that the vertices of $F$ form a path from the top-left vertex of $M^F$ to the bottom-right one and every vertex of the path is one step lower or one step to the right with respect to the previous one. For an example, look at the Figure \ref{Facet path}.

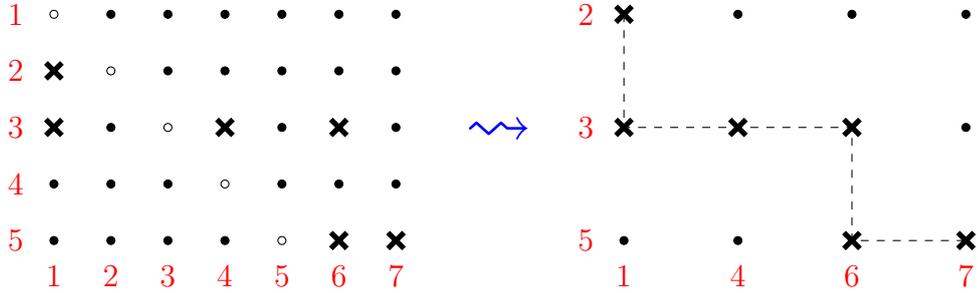
\begin{figure}[h]
\begin{center}
\begin{tikzpicture}[scale=1.5]
\draw [fill=black] (0,0) circle (1pt);
\draw [fill=black] (.5,0) circle (1pt);
\draw [fill=black] (1,0) circle (1pt);
\draw [fill=black] (1.5,0) circle (1pt);
\draw [black] (2,0) circle (1pt);
\draw [fill=black] (0,.5) circle (1pt);
\draw [fill=black] (.5,.5) circle (1pt);
\draw [fill=black] (1,.5) circle (1pt);
\draw [black] (1.5,.5) circle (1pt);
\draw [fill=black] (2,.5) circle (1pt);
\draw [fill=black] (2.5,.5) circle (1pt);
\draw [fill=black] (3,.5) circle (1pt);
\draw [fill=black] (.5,1) circle (1pt);
\draw [black] (1,1) circle (1pt);
\draw [fill=black] (2,1) circle (1pt);
\draw [fill=black] (3,1) circle (1pt);
\draw [black] (.5,1.5) circle (1pt);
\draw [fill=black] (1,1.5) circle (1pt);
\draw [fill=black] (1.5,1.5) circle (1pt);
\draw [fill=black] (2,1.5) circle (1pt);
\draw [fill=black] (2.5,1.5) circle (1pt);
\draw [fill=black] (3,1.5) circle (1pt);
\draw [black] (0,2) circle (1pt);
\draw [fill=black] (.5,2) circle (1pt);
\draw [fill=black] (1,2) circle (1pt);
\draw [fill=black] (1.5,2) circle (1pt);
\draw [fill=black] (2,2) circle (1pt);
\draw [fill=black] (2.5,2) circle (1pt);
\draw [fill=black] (3,2) circle (1pt);
\node [black,above,text=red] at (0,-.5) {$1$};
\node [black,above,text=red] at (.5,-.5) {$2$};
\node [black,above,text=red] at (1,-.5) {$3$};
\node [black,above,text=red] at (1.5,-.5) {$4$};
\node [black,above,text=red] at (2,-.5) {$5$};
\node [black,above,text=red] at (2.5,-.5) {$6$};
\node [black,above,text=red] at (3,-.5) {$7$};
\node [black,right,text=red] at (-.5,0) {$5$};
\node [black,right,text=red] at (-.5,.5) {$4$};
\node [black,right,text=red] at (-.5,1) {$3$};
\node [black,right,text=red] at (-.5,1.5) {$2$};
\node [black,right,text=red] at (-.5,2) {$1$};
\draw (2.5,0) node[cross=5pt,line width=2pt] {};
\draw (3,0) node[cross=5pt,line width=2pt] {};
\draw (0,1) node[cross=5pt,line width=2pt] {};
\draw (1.5,1) node[cross=5pt,line width=2pt] {};
\draw (2.5,1) node[cross=5pt,line width=2pt] {};
\draw (0,1.5) node[cross=5pt,line width=2pt] {};

\node [black,above,text=blue,font=\huge] at (3.9,.75) {$\rightsquigarrow$};

\node [black,above,text=red] at (5,-.5) {$1$};
\node [black,above,text=red] at (6,-.5) {$4$};
\node [black,above,text=red] at (7,-.5) {$6$};
\node [black,above,text=red] at (8,-.5) {$7$};
\node [black,right,text=red] at (4.5,0) {$5$};
\node [black,right,text=red] at (4.5,1) {$3$};
\node [black,right,text=red] at (4.5,2) {$2$};
\draw [dashed] (5,2)--(5,1)--(6,1)--(7,1)--(7,0)--(8,0);
\draw (5,2) node[cross=5pt,line width=2pt] {};
\draw (5,1) node[cross=5pt,line width=2pt] {};
\draw (6,1) node[cross=5pt,line width=2pt] {};
\draw (7,1) node[cross=5pt,line width=2pt] {};
\draw (7,0) node[cross=5pt,line width=2pt] {};
\draw (8,0) node[cross=5pt,line width=2pt] {};
\draw [fill=black] (5,0) circle (1pt);
\draw [fill=black] (6,0) circle (1pt);
\draw [fill=black] (8,1) circle (1pt);
\draw [fill=black] (6,2) circle (1pt);
\draw [fill=black] (7,2) circle (1pt);
\draw [fill=black] (8,2) circle (1pt);
\end{tikzpicture}
\caption{Case $m=5,$ $n=7;$ $r^F=\{2,3,5\},$ $c^F = \{1,4,6,7\}$ and $F=\{(2,1),(3,1),(3,4),(3,6),(5,6),(5,7)\}.$}
\label{Facet path}
\end{center}
\end{figure}

\begin{notation}
Let $F \in \FF(\Delta)$ and consider its vertices in the matrix $M^F.$ A vertex of $F$ is called:
\begin{itemize}
\item a right turning vertex, if it has a previous vertex one step to the left and a following vertex one step lower (e.g. $(3,6)$ in the figure);
\item a left turning vertex, if it has a previous vertex one step higher and a following vertex one step to the right (e.g. $(3,1)$ in the figure);
\item an horizontal vertex, if it is the only vertex in its column (e.g. $(3,4)$ in the figure);
\item a vertical vertex, if it is the only vertex in its row (e.g. $(2,1)$ in the figure).
\end{itemize}
\end{notation}

From this characterization of the facets of $\Delta,$ we can figure out that every $F \in \FF(\Delta)$ has cardinality $|F|=|r^F|+|c^F|-1=n-1,$ therefore $\Delta$ is a pure ($n-2$)-dimensional simplicial complex.\\
Now we want to prove that $\Delta$ is a quasimanifold.

\begin{definition}
A pure $d$-dimensional simplicial complex $\Gamma$ is said to be a $d$-\textbf{quasimanifold} if the following hold:
\begin{enumerate}[(a)]
\item for every ($d-1$)-dimensional face $\sigma \in \Gamma,$ $\lk_\Gamma(\sigma)$ is either one or two points, i.e. $\sigma$ is contained in one or two facets;
\item for every face $\sigma \in \Gamma,$ such that $\dm(\sigma)<d-1,$ $\lk_\Gamma(\sigma)$ is connected, i.e. $\Hr_0(\lk_\Gamma(\sigma);\Bbbk)=0.$
\end{enumerate}
\end{definition}

Before the next proposition, let us give an useful criterion for the Cohen-Macaulayness of simplicial complexes. One can find the proof in \cite[Corollary 5.3.9]{BruHe93}.

\begin{theorem}[Reisner's criterion]
Let $\Gamma$ be a simplicial complex over a field $\Bbbk.$ $\Gamma$ is Cohen-Macaulay if and only if $\Hr_i(\lk_\Gamma(\sigma);\Bbbk)=0,$ for all $\sigma \in \Gamma$ and all $i \neq \dm(\lk_\Gamma(\sigma)).$ 
\end{theorem}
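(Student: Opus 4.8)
The plan is to route through local cohomology and Hochster's formula. Let $T$ be the polynomial ring on the vertex set $V$ of $\Gamma$, so that $\Bbbk[\Gamma] = T/I_\Gamma$ for the Stanley--Reisner ideal $I_\Gamma$; write $R = \Bbbk[\Gamma]$, let $\mm$ be its graded maximal ideal, and put $d = \dm(R) = \dm(\Gamma)+1$. By Grothendieck's vanishing and non-vanishing theorems, $\textup{depth}(R) = \min\{i : H^i_\mm(R) \ne 0\}$ and $\dm(R) = \max\{i : H^i_\mm(R) \ne 0\}$, so $R$ is Cohen--Macaulay if and only if $H^i_\mm(R) = 0$ for every $i < d$. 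The whole statement therefore reduces to reading this vanishing off the links of $\Gamma$.

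The core of the proof is Hochster's formula: for every $\mathbf a \in \ZZ^{V}$,
\[
H^i_\mm(R)_{\mathbf a} \;\cong\;
\begin{cases}
\Hr_{\,i-|\sigma|-1}\big(\lk_\Gamma(\sigma);\Bbbk\big) & \text{if } \mathbf a \le 0 \text{ and } \sigma := \{v : a_v \ne 0\} \in \Gamma,\\
0 & \text{otherwise.}
\end{cases}
\]
I would prove this by computing $H^\bullet_\mm(R)$ as the cohomology of the \v{C}ech complex on the variables $x_v$, whose term in cohomological degree $i$ is $\bigoplus_{|F|=i} R_{x_F}$, and then splitting into multidegrees. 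One checks that $(R_{x_F})_{\mathbf a}$ is one-dimensional precisely when $F \supseteq \{v : a_v < 0\}$ and $F \cup \{v : a_v > 0\} \in \Gamma$, and is $0$ otherwise; writing $F$ as the disjoint union of $\{v : a_v < 0\}$ with a subset $H$ of the remaining variables, and keeping track of the \v{C}ech signs, the degree-$\mathbf a$ strand becomes a shift by $|\{v : a_v < 0\}|$ of the augmented (reduced) oriented cochain complex of the simplicial complex $\Lambda_{\mathbf a} = \{H : H \cap \{v:a_v<0\} = \emptyset,\ H \cup \{v:a_v\ne 0\} \in \Gamma\}$. If $\mathbf a$ has a positive coordinate $w$, then $\Lambda_{\mathbf a}$ is a cone with apex $w$, hence contractible, so its reduced cohomology vanishes and $H^i_\mm(R)_{\mathbf a}=0$. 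If instead $\mathbf a \le 0$, then $\Lambda_{\mathbf a} = \lk_\Gamma(\sigma)$ for $\sigma = \{v : a_v \ne 0\}$ (the void complex when $\sigma \notin \Gamma$), and since $\Bbbk$ is a field reduced cohomology agrees with reduced homology, which gives the displayed isomorphism.

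Granting the formula, $R$ is Cohen--Macaulay iff $\Hr_{\,i-|\sigma|-1}(\lk_\Gamma(\sigma);\Bbbk) = 0$ for all $\sigma \in \Gamma$ and all $i < d$; reindexing with $j = i - |\sigma| - 1$, this says $\Hr_j(\lk_\Gamma(\sigma);\Bbbk) = 0$ for all $\sigma \in \Gamma$ and all $j \le d - |\sigma| - 2$. It remains to identify $d - |\sigma| - 2$ with $\dm(\lk_\Gamma(\sigma)) - 1$ and to absorb the degrees $j > \dm(\lk_\Gamma(\sigma))$, where simplicial homology vanishes for free; the resulting condition is exactly ``$\Hr_j(\lk_\Gamma(\sigma);\Bbbk) = 0$ for $j \ne \dm(\lk_\Gamma(\sigma))$''. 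This identification requires $\dm(\lk_\Gamma(\sigma)) = \dm(\Gamma) - |\sigma|$ for every face $\sigma$, i.e. that $\Gamma$ be pure. For the forward direction this is automatic: if some facet $\tau$ had $\dm(\tau) < \dm(\Gamma)$, then $\lk_\Gamma(\tau) = \{\emptyset\}$ would force, via the formula, $H^{|\tau|}_\mm(R) \ne 0$ with $|\tau| < d$, contradicting Cohen--Macaulayness (equivalently, invoke that a Cohen--Macaulay ring is unmixed). For the converse, one shows that the homological hypothesis already forces purity: inducting on $\dm(\Gamma)$, every vertex link is pure, $\Gamma$ is connected because $\Hr_0(\Gamma;\Bbbk)=0$ as soon as $\dm(\Gamma) \ge 1$, and equidimensionality then propagates along chains of facets sharing a vertex.

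The main obstacle is the \v{C}ech-complex bookkeeping in the second paragraph: matching supports, orientations and signs so that the degree-$\mathbf a$ strand is honestly the reduced cochain complex of $\lk_\Gamma(\sigma)$ and not a shift of it by the wrong amount, and handling cleanly the degenerate multidegrees where $\Lambda_{\mathbf a}$ is void or a cone. Once Hochster's formula is in place, the passage to Reisner's criterion is purely a matter of reindexing together with the elementary vanishing of reduced homology outside the range $-1 \le j \le \dm(\lk_\Gamma(\sigma))$.
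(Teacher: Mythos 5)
Your proposal is correct, and it is essentially the proof the paper points to: the paper does not prove Reisner's criterion itself but cites \cite[Corollary 5.3.9]{BruHe93}, whose argument is exactly your route — compute $H^\bullet_{\mm}(\Bbbk[\Gamma])$ via the \v{C}ech complex, split into $\ZZ^V$-degrees to obtain Hochster's formula $H^i_\mm(\Bbbk[\Gamma])_{\mathbf a} \cong \Hr_{i-|\sigma|-1}(\lk_\Gamma(\sigma);\Bbbk)$, and then translate Grothendieck vanishing into the link condition, with the purity discussion you include to reconcile $\dm(\Gamma)-|\sigma|$ with $\dm(\lk_\Gamma(\sigma))$. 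Your bookkeeping (the cone trick for multidegrees with a positive entry, the shift by $|\sigma|+1$, and the reindexing) matches the standard treatment, so no gaps to report.
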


\begin{proposition}\label{Delta quasimanifold}
$\Delta$ is a ($n-2$)-quasimanifold.
\end{proposition}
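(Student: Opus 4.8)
The plan is to verify the two conditions of the definition separately, since they need very different amounts of work. Condition (b) is essentially free: we already know that $\Bbbk[\Delta]$ is Cohen-Macaulay, so Reisner's criterion gives $\Hr_i(\lk_\Delta(\sigma);\Bbbk)=0$ for every face $\sigma$ and every $i\neq\dm(\lk_\Delta(\sigma))$. Since $\Delta$ is pure of dimension $n-2$, for any face $\sigma$ the link $\lk_\Delta(\sigma)$ is pure of dimension $n-3-\dm(\sigma)$ (every facet of $\Delta$ through $\sigma$ restricts to a facet of the link, and conversely); so if $\dm(\sigma)<n-3$ this dimension is at least $1$, in particular nonzero, and therefore $\Hr_0(\lk_\Delta(\sigma);\Bbbk)=0$, i.e. $\lk_\Delta(\sigma)$ is connected. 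Thus all the content is in condition (a).

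For (a) I would argue combinatorially, using the description of the facets of $\Delta$ obtained above as monotone staircase paths with $r^F\sqcup c^F=[n]$. By the Remark above we may assume $m<n$ (for $m=n$ the ring $S/I$ is Gorenstein and $\Delta$ has no cone points, so $\Delta$ is a homology sphere, hence a quasimanifold); in particular $n\ge 3$. Fix a face $\sigma$ with $\dm(\sigma)=n-3$, i.e. $|\sigma|=n-2$; purity gives at least one facet through $\sigma$, and we must bound their number by $2$. Since $\sigma$ is contained in a facet, its vertices are pairwise comparable in the product order on $M$ with $r^\sigma\cap c^\sigma=\emptyset$, and a step count along the chain $\sigma$ gives $|\sigma|\le |r^\sigma|+|c^\sigma|-1\le n-1$, so $|r^\sigma|+|c^\sigma|\in\{n-1,n\}$. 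If $|r^\sigma|+|c^\sigma|=n$, then $r^\sigma\sqcup c^\sigma=[n]$ and $\sigma$ is a monotone path in $M^\sigma$ with exactly one ``diagonal'' step, from some $(r,c)$ to $(r',c')$, where $r'$ and $c'$ are the successors of $r$ and $c$ in $r^\sigma$ and $c^\sigma$; any facet $F\supseteq\sigma$ satisfies $r^F=r^\sigma$ and $c^F=c^\sigma$ by a cardinality count, so $F=\sigma\cup\{v\}$ with $v$ completing the path through the skipped cell, and there are exactly the two choices $v=(r,c')$ and $v=(r',c)$ (both genuine vertices, since $r^\sigma\cap c^\sigma=\emptyset$). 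If instead $|r^\sigma|+|c^\sigma|=n-1$, then $\sigma$ is a \emph{full} monotone path in $M^\sigma$ and there is a unique index $z\in[n]\setminus(r^\sigma\sqcup c^\sigma)$; a facet $F=\sigma\cup\{v\}$ must bring in $z$, and since $F$ cannot be the cone point $(z,z)$ nor introduce two new indices, the cardinality count forces either $v=(z,j)$ with $j\in c^\sigma$ (possible only if $z\le m$) or $v=(i,z)$ with $i\in r^\sigma$.

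The fiddliest step, and the one I expect to be the real obstacle, is showing that in this last case each of the two shapes of $v$ is attained by at most one vertex. For $v=(z,j)$: if $r_s<z<r_{s+1}$ where $r^\sigma=\{r_1<\dots<r_p\}$, then $\sigma$ passes from row $r_s$ to row $r_{s+1}$ at a single down-step $(r_s,\beta)\to(r_{s+1},\beta)$, and comparability of $(z,j)$ with both $(r_s,\beta)$ and $(r_{s+1},\beta)$ forces $\beta\le j$ and $j\le\beta$, so $j=\beta$; one then checks that $(z,\beta)$ is comparable with every vertex of $\sigma$, the ``anti-diagonal'' (the $i\neq l$, $k\neq j$) conditions being automatic because $r^\sigma\cap c^\sigma=\emptyset$. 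The extreme subcases $z<\min r^\sigma$ and $z>\max r^\sigma$ are handled the same way against the first, resp. last, vertex of the path, and the vertices of shape $(i,z)$ are symmetric, reading off the right-step of $\sigma$ between the two consecutive columns of $c^\sigma$ surrounding $z$. Altogether $\sigma$ lies in exactly two facets when $|r^\sigma|+|c^\sigma|=n$, and in exactly one or two when $|r^\sigma|+|c^\sigma|=n-1$ (two precisely when $z\le m$), which in every case is $\le 2$. Together with (b), this shows $\Delta$ is an $(n-2)$-quasimanifold.
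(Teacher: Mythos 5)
Your proof is correct and follows essentially the same route as the paper: condition (b) via Cohen--Macaulayness and Reisner's criterion, and condition (a) by a direct combinatorial enumeration of the facets containing a codimension-one face, based on the staircase description of the facets. The only difference is organizational --- you classify the face $\sigma$ intrinsically by $|r^\sigma|+|c^\sigma|\in\{n-1,n\}$, whereas the paper classifies the vertex removed from a facet as turning, vertical, or horizontal --- and the two case analyses correspond exactly (your case $z>m$ is the paper's horizontal vertex in a column beyond the $m$-th, the only situation in which $\sigma$ lies in a single facet).
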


\begin{proof}
First, note that the property $(b)$ is satisfied, since it is implied by the Cohen-Macaulayness of $\Delta,$ using Reisner's criterion.\\
To consider a general face of $\Delta$ of codimension $1,$ we can remove a general vertex $v=(r_i,c_j)$ from a general facet $F \in \FF(\Delta)$ and analyze all possible cases. Let $F'=F \smallsetminus \{v\}.$\\
If $v$ is a turning vertex of $F$ (see Figure \ref{Left and right vertex}), then $M^F=M^{F'},$ because all the rows and columns are still occupied by $F'.$ The only vertex $v' \neq v$ of $M^F$ we can add to $F'$ such that $F' \cup \{v'\} \in \Delta,$ is the one obtained by flipping $v,$ that is $v'=(r_{i+1},c_{j-1}),$ if $v$ is a right turning vertex, or $v'=(r_{i-1},c_{j+1}),$ if $v$ is a left turning vertex.\\
If $v$ is a vertical vertex (see Figure \ref{Vertical vertex}), then no vertex of $F'$ is in the row $r_i$ and note that no other vertex of $M^F$ can be added to $F'$ to obtain a facet. Since the the row $r_i$ is now free, we have another choice: we can consider the submatrix that comes when we remove from $M^F$ the row $r_i$ and we add again $r_i$ as a column; notice that there is only one suitable vertex $v'$ in the column $r_i$ that we can add to $F'.$\\
If $v$ is an horizontal vertex, then, if $c_j \leq m,$ the process is analogous, with the proper modifications (changing rows with columns), to the previous one; otherwise, we cannot add the row $c_j,$ because the matrix $M$ has only $m$ rows.\\
Notice that in the latter case $F'$ is contained only in one facet, that is $F,$ and in all the other cases is contained in two. This proves the property $(a)$ and the statement.\\
\end{proof}

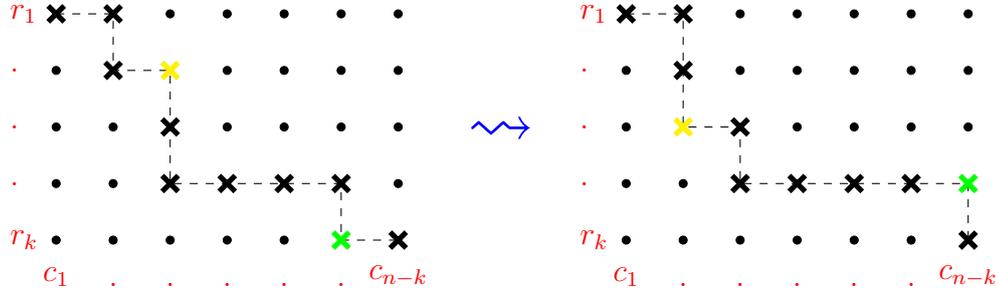
\begin{figure}[h]
\begin{center}
\begin{tikzpicture}[scale=1.5]
\draw [fill=black] (1,2) circle (1pt);
\draw [fill=black] (1.5,2) circle (1pt);
\draw [fill=black] (2,2) circle (1pt);
\draw [fill=black] (2.5,2) circle (1pt);
\draw [fill=black] (3,2) circle (1pt);
\draw [fill=black] (0,1.5) circle (1pt);
\draw [fill=black] (1.5,1.5) circle (1pt);
\draw [fill=black] (2,1.5) circle (1pt);
\draw [fill=black] (2.5,1.5) circle (1pt);
\draw [fill=black] (3,1.5) circle (1pt);
\draw [fill=black] (0,1) circle (1pt);
\draw [fill=black] (.5,1) circle (1pt);
\draw [fill=black] (1.5,1) circle (1pt);
\draw [fill=black] (2,1) circle (1pt);
\draw [fill=black] (2.5,1) circle (1pt);
\draw [fill=black] (3,1) circle (1pt);
\draw [fill=black] (0,.5) circle (1pt);
\draw [fill=black] (.5,.5) circle (1pt);
\draw [fill=black] (3,.5) circle (1pt);
\draw [fill=black] (0,0) circle (1pt);
\draw [fill=black] (.5,0) circle (1pt);
\draw [fill=black] (1,0) circle (1pt);
\draw [fill=black] (1.5,0) circle (1pt);
\draw [fill=black] (2,0) circle (1pt);
\draw [dashed] (0,2)--(.5,2)--(.5,1.5)--(1,1.5)--(1,1)--(1,.5)--(1.5,.5)--(2,.5)--(2.5,.5)--(2.5,0)--(3,0);
\node [black,above,text=red] at (0,-.5) {$c_1$};
\node [black,above,text=red] at (.5,-.5) {$.$};
\node [black,above,text=red] at (1,-.5) {$.$};
\node [black,above,text=red] at (1.5,-.5) {$.$};
\node [black,above,text=red] at (2,-.5) {$.$};
\node [black,above,text=red] at (2.5,-.5) {.};
\node [black,above,text=red] at (3,-.5) {$c_{n-k}$};
\node [black,right,text=red] at (-.5,0) {$r_k$};
\node [black,right,text=red] at (-.5,.5) {$.$};
\node [black,right,text=red] at (-.5,1) {$.$};
\node [black,right,text=red] at (-.5,1.5) {$.$};
\node [black,right,text=red] at (-.5,2) {$r_1$};
\draw (0,2) node[cross=5pt,line width=2pt] {};
\draw (.5,2) node[cross=5pt,line width=2pt] {};
\draw (.5,1.5) node[cross=5pt,line width=2pt] {};
\draw (1,1.5) node[cross=5pt,line width=2pt,yellow] {};
\draw (1,1) node[cross=5pt,line width=2pt] {};
\draw (1,.5) node[cross=5pt,line width=2pt] {};
\draw (1.5,.5) node[cross=5pt,line width=2pt] {};
\draw (2,.5) node[cross=5pt,line width=2pt] {};
\draw (2.5,.5) node[cross=5pt,line width=2pt] {};
\draw (2.5,0) node[cross=5pt,line width=2pt,green] {};
\draw (3,0) node[cross=5pt,line width=2pt] {};

\node [black,above,text=blue,font=\huge] at (3.9,.75) {$\rightsquigarrow$};

\draw [fill=black] (6,2) circle (1pt);
\draw [fill=black] (6.5,2) circle (1pt);
\draw [fill=black] (7,2) circle (1pt);
\draw [fill=black] (7.5,2) circle (1pt);
\draw [fill=black] (8,2) circle (1pt);
\draw [fill=black] (5,1.5) circle (1pt);
\draw [fill=black] (6,1.5) circle (1pt);
\draw [fill=black] (6.5,1.5) circle (1pt);
\draw [fill=black] (7,1.5) circle (1pt);
\draw [fill=black] (7.5,1.5) circle (1pt);
\draw [fill=black] (8,1.5) circle (1pt);
\draw [fill=black] (5,1) circle (1pt);
\draw [fill=black] (6.5,1) circle (1pt);
\draw [fill=black] (7,1) circle (1pt);
\draw [fill=black] (7.5,1) circle (1pt);
\draw [fill=black] (8,1) circle (1pt);
\draw [fill=black] (5,.5) circle (1pt);
\draw [fill=black] (5.5,.5) circle (1pt);
\draw [fill=black] (6,0) circle (1pt);
\draw [fill=black] (5,0) circle (1pt);
\draw [fill=black] (5.5,0) circle (1pt);
\draw [fill=black] (6.5,0) circle (1pt);
\draw [fill=black] (7,0) circle (1pt);
\draw [fill=black] (7.5,0) circle (1pt);
\draw [dashed] (5,2)--(5.5,2)--(5.5,1.5)--(5.5,1)--(6,1)--(6,.5)--(6.5,.5)--(7,.5)--(7.5,.5)--(8,.5)--(8,0);
\node [black,above,text=red] at (5,-.5) {$c_1$};
\node [black,above,text=red] at (5.5,-.5) {$.$};
\node [black,above,text=red] at (6,-.5) {$.$};
\node [black,above,text=red] at (6.5,-.5) {$.$};
\node [black,above,text=red] at (7,-.5) {$.$};
\node [black,above,text=red] at (7.5,-.5) {.};
\node [black,above,text=red] at (8,-.5) {$c_{n-k}$};
\node [black,right,text=red] at (4.5,0) {$r_k$};
\node [black,right,text=red] at (4.5,.5) {$.$};
\node [black,right,text=red] at (4.5,1) {$.$};
\node [black,right,text=red] at (4.5,1.5) {$.$};
\node [black,right,text=red] at (4.5,2) {$r_1$};
\draw (5,2) node[cross=5pt,line width=2pt] {};
\draw (5.5,2) node[cross=5pt,line width=2pt] {};
\draw (5.5,1.5) node[cross=5pt,line width=2pt] {};
\draw (5.5,1) node[cross=5pt,line width=2pt,yellow] {};
\draw (6,1) node[cross=5pt,line width=2pt] {};
\draw (6,.5) node[cross=5pt,line width=2pt] {};
\draw (6.5,.5) node[cross=5pt,line width=2pt] {};
\draw (7,.5) node[cross=5pt,line width=2pt] {};
\draw (7.5,.5) node[cross=5pt,line width=2pt] {};
\draw (8,.5) node[cross=5pt,line width=2pt,green] {};
\draw (8,0) node[cross=5pt,line width=2pt] {};

\end{tikzpicture}
\caption{$v$ is a right (yellow) or left (green) turning vertex of $F.$}
\label{Left and right vertex}
\end{center}
\end{figure}

\begin{figure}[h]
\begin{center}
\begin{tikzpicture}[scale=1.5]
\draw [fill=black] (1,2) circle (1pt);
\draw [fill=black] (1.5,2) circle (1pt);
\draw [fill=black] (2,2) circle (1pt);
\draw [fill=black] (2.5,2) circle (1pt);
\draw [fill=black] (3,2) circle (1pt);
\draw [fill=black] (0,1.5) circle (1pt);
\draw [fill=black] (1.5,1.5) circle (1pt);
\draw [fill=black] (2,1.5) circle (1pt);
\draw [fill=black] (2.5,1.5) circle (1pt);
\draw [fill=black] (3,1.5) circle (1pt);
\draw [fill=black] (0,1) circle (1pt);
\draw [fill=black] (.5,1) circle (1pt);
\draw [fill=black] (1.5,1) circle (1pt);
\draw [fill=black] (2,1) circle (1pt);
\draw [fill=black] (2.5,1) circle (1pt);
\draw [fill=black] (3,1) circle (1pt);
\draw [fill=black] (0,.5) circle (1pt);
\draw [fill=black] (.5,.5) circle (1pt);
\draw [fill=black] (3,.5) circle (1pt);
\draw [fill=black] (0,0) circle (1pt);
\draw [fill=black] (.5,0) circle (1pt);
\draw [fill=black] (1,0) circle (1pt);
\draw [fill=black] (1.5,0) circle (1pt);
\draw [fill=black] (2,0) circle (1pt);
\draw [dashed] (0,2)--(.5,2)--(.5,1.5)--(1,1.5)--(1,1)--(1,.5)--(1.5,.5)--(2,.5)--(2.5,.5)--(2.5,0)--(3,0);
\node [black,above,text=red] at (0,-.5) {$c_1$};
\node [black,above,text=red] at (.5,-.5) {$.$};
\node [black,above,text=red] at (1,-.5) {$.$};
\node [black,above,text=red] at (1.5,-.5) {$.$};
\node [black,above,text=red] at (2,-.5) {$.$};
\node [black,above,text=red] at (2.5,-.5) {.};
\node [black,above,text=red] at (3,-.5) {$c_{n-k}$};
\node [black,right,text=red] at (-.5,0) {$r_k$};
\node [black,right,text=red] at (-.5,.5) {$.$};
\node [black,right,text=red] at (-.5,1) {$r_i$};
\node [black,right,text=red] at (-.5,1.5) {$.$};
\node [black,right,text=red] at (-.5,2) {$r_1$};
\draw (0,2) node[cross=5pt,line width=2pt] {};
\draw (.5,2) node[cross=5pt,line width=2pt] {};
\draw (.5,1.5) node[cross=5pt,line width=2pt] {};
\draw (1,1.5) node[cross=5pt,line width=2pt] {};
\draw (1,1) node[cross=5pt,line width=2pt,green] {};
\draw (1,.5) node[cross=5pt,line width=2pt] {};
\draw (1.5,.5) node[cross=5pt,line width=2pt] {};
\draw (2,.5) node[cross=5pt,line width=2pt] {};
\draw (2.5,.5) node[cross=5pt,line width=2pt] {};
\draw (2.5,0) node[cross=5pt,line width=2pt] {};
\draw (3,0) node[cross=5pt,line width=2pt] {};
			
\node [black,above,text=blue,font=\huge] at (3.9,.75) {$\rightsquigarrow$};

\draw [fill=black] (6,1.5) circle (1pt);
\draw [fill=black] (6.5,1.5) circle (1pt);
\draw [fill=black] (7,1.5) circle (1pt);
\draw [fill=black] (7.5,1.5) circle (1pt);
\draw [fill=black] (8,1.5) circle (1pt);
\draw [fill=black] (8.5,1.5) circle (1pt);
\draw [fill=black] (5,1) circle (1pt);
\draw [fill=black] (6.5,1) circle (1pt);
\draw [fill=black] (7,1) circle (1pt);
\draw [fill=black] (7.5,1) circle (1pt);
\draw [fill=black] (8,1) circle (1pt);
\draw [fill=black] (8.5,1) circle (1pt);
\draw [fill=black] (5,.5) circle (1pt);
\draw [fill=black] (5.5,.5) circle (1pt);
\draw [fill=black] (8.5,.5) circle (1pt);
\draw [fill=black] (5,0) circle (1pt);
\draw [fill=black] (5.5,0) circle (1pt);
\draw [fill=black] (6,0) circle (1pt);
\draw [fill=black] (6.5,0) circle (1pt);
\draw [fill=black] (7,0) circle (1pt);
\draw [fill=black] (7.5,0) circle (1pt);
\draw [dashed] (5,1.5)--(5.5,1.5)--(5.5,1)--(6,1)--(6,.5)--(6.5,.5)--(7,.5)--(7.5,.5)--(8,.5)--(8,0)--(8.5,0);
\node [black,above,text=red] at (5,-.5) {$c_1$};
\node [black,above,text=red] at (5.5,-.5) {$.$};
\node [black,above,text=red] at (6,-.5) {$.$};
\node [black,above,text=red] at (6.5,-.5) {$.$};
\node [black,above,text=red] at (7,-.5) {$.$};
\node [black,above,text=red] at (7.5,-.5) {$r_i$};
\node [black,above,text=red] at (8,-.5) {$.$};
\node [black,above,text=red] at (8.5,-.5) {$c_{n-k}$};
\node [black,right,text=red] at (4.5,0) {$r_k$};
\node [black,right,text=red] at (4.5,.5) {$.$};
\node [black,right,text=red] at (4.5,.75) {$\hat{r_i}$};
\node [black,right,text=red] at (4.5,1) {$.$};
\node [black,right,text=red] at (4.5,1.5) {$r_1$};
\draw (5,1.5) node[cross=5pt,line width=2pt] {};
\draw (5.5,1.5) node[cross=5pt,line width=2pt] {};
\draw (5.5,1) node[cross=5pt,line width=2pt] {};
\draw (6,1) node[cross=5pt,line width=2pt] {};
\draw (6,.5) node[cross=5pt,line width=2pt] {};
\draw (6.5,.5) node[cross=5pt,line width=2pt] {};
\draw (7,.5) node[cross=5pt,line width=2pt] {};
\draw (7.5,.5) node[cross=5pt,line width=2pt,green] {};
\draw (8,.5) node[cross=5pt,line width=2pt] {};
\draw (8,0) node[cross=5pt,line width=2pt] {};
\draw (8.5,0) node[cross=5pt,line width=2pt] {};
			
\end{tikzpicture}
\caption{$v$ is a vertical vertex of $F.$}
\label{Vertical vertex}
\end{center}
\end{figure}
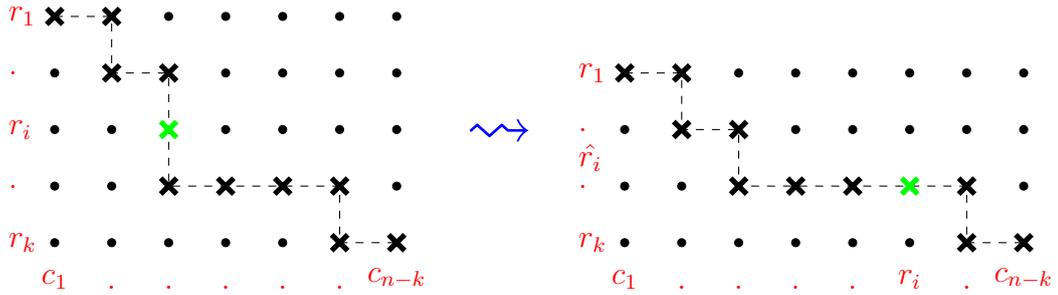

\begin{remark}\label{Boundary faces}
In the latter proof, we have characterized those faces of $\Delta$ of codimension $1$ that are contained in only one facet (all the other ones are contained in two facets). Those faces $\sigma$ are the ($n-3$)-faces that satisfy one of the two following equivalent conditions:
\begin{itemize}
\item $r^\sigma \cup c^\sigma = [n] \smallsetminus \{l\},$ with $l>m;$
\item there exists $F \in \FF(\Delta)$ such that $\sigma$ can be obtained removing from $F$ an horizontal vertex which stays in the column $l,$ with $l>m.$
\end{itemize}
Notice that such faces always exist in the rectangular case $m<n$. We can take the facet $F = \{(1,k) : k \in \{2,\dots,n\}\}$ and remove the vertex $(1,n)$ to obtain the face that we want.\\
If we look at the quadratic case $m=n$, $\Delta$ is still a quasimanifold, but all the faces of codimension 1 are contained in exactly two facets. This difference is essentially the reason why $\Delta$ is Gorenstein in the quadratic case and is not Gorenstein in the rectangular case.
\end{remark}

\section{The canonical module and the levelness} \label{levelness}

Following the idea of the Remark \ref{Canonical level}, which tells us that we can test the levelness of $\Bbbk[\Delta]$ studying his canonical module $\omega_{\Bbbk[\Delta]},$ in this section we investigate about its structure.\\
Let $\Gamma$ be a $d$-dimensional simplicial complex on $[r],$ for some $r \in \NN,$ over a field $\Bbbk.$ For all $\sigma \in \Gamma$ and $U=(U_1,\dots,U_r) \in \NN^{[r]},$ we define the contrastar of $\sigma$ in $\Gamma$ as the simplicial complex $\cost_\Gamma(\sigma) \coloneqq \{\tau \in \Gamma : \sigma \nsubseteq \tau\}$ and the support of $U$ as $s(U) \coloneqq \{i \in [r] : U_i \neq 0\}.$\\
Let $H_i(\Gamma,\Gamma';\Bbbk)$ be the $i$-th relative homology of the pair of simplicial complexes $\Gamma' \subseteq \Gamma$ and $\Hr_i(\Gamma;\Bbbk)$ the $i$-th reduced homology of $\Gamma$. One may check in \cite{Gr84a} that, for all $i \in \ZZ$ and all $\sigma \in \Gamma$, the following isomorphism of $\Bbbk$-vector spaces holds:

\begin{equation}\label{Eq 1}
\Hr_{i-|\sigma|}(\lk_\Gamma(\sigma);\Bbbk) \cong H_i(\Gamma,\cost_\Gamma(\sigma);\Bbbk).
\end{equation}

\begin{remark}
Recall that an element of $H_i(\Gamma,\cost_\Gamma(\sigma);\Bbbk)$ is of the type

$$\overline{\sum_{\substack{\eta \in \Gamma \smallsetminus \cost_\Gamma(\sigma) \\ \dm(\eta)=i}} \hspace{-.2cm} a_\eta \eta} = \sum_{\substack{\eta \in \Gamma \smallsetminus \cost_\Gamma(\sigma) \\ \dm(\eta)=i}} \hspace{-.2cm} a_\eta \eta + \widetilde{C}_i(\cost_\Gamma(\sigma)) \in H_i(\Gamma,\cost_\Gamma(\sigma);\Bbbk),$$

where, for all $\eta,$ $a_\eta \in \Bbbk.$ For $\sigma \subseteq \tau \in \Gamma$ and $i \in \ZZ$ there is a natural restriction map $f_i(\sigma,\tau)$

$$\overline{\sum_{\substack{\eta \in \Gamma \smallsetminus \cost_\Gamma(\sigma) \\ \dm(\eta)=i}} \hspace{-.2cm} a_\eta \eta} \in H_i(\Gamma,\cost_\Gamma(\sigma);\Bbbk) \longmapsto \overline{\sum_{\substack{\eta \in \Gamma \smallsetminus \cost_\Gamma(\tau) \\ \dm(\eta)=i}} \hspace{-.2cm} a_\eta \eta} \in H_i(\Gamma,\cost_\Gamma(\tau);\Bbbk)$$
\end{remark}

The next theorem from Gr\"abe (see \cite[Theorem 4]{Gr84a}) tells us about the structure of the canonical module of a Stanley-Reisner ring.

\begin{theorem}\label{Grabe canonical}
Let $\Gamma$ be a $d$-dimensional simplicial complex on $[r]$ for some $r \in \NN$. There exists an isomorphism of $\NN^r$-graded $\Bbbk[\Gamma]$-modules

$$\omega_{\Bbbk[\Gamma]} \cong \displaystyle \bigoplus_{U \in \NN^r, s(U) \in \Gamma} H_d(\Gamma,\cost_\Gamma(s(U));\Bbbk)$$

The structure of $\Bbbk[\Gamma]$-module, for $U,W \in \NN^r$ such that $s(U) \in \Gamma,$ is given by

$$\cdot x_W = \begin{cases}
	f_d(s(U),s(U+W)) & \text{if } s(U+W) \in \Gamma \\
	0-\text{map} & \text{otherwise}
\end{cases}$$
\end{theorem}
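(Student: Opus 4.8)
The plan is to follow Gräbe's argument and identify the canonical module via local cohomology using the Hochster-type formula, then dualize. First I would recall that for the Stanley–Reisner ring $\Bbbk[\Gamma]$ of a $d$-dimensional complex on $[r]$, the graded local cohomology modules $H^i_{\mm}(\Bbbk[\Gamma])$ admit Hochster's decomposition: in $\NN^r$-multidegree $-U$ (for $U \in \NN^r$) the component is $\widetilde H_{i-|s(U)|-1}(\lk_\Gamma(s(U));\Bbbk)$ when $s(U) \in \Gamma$ and $U$ is supported appropriately, and $0$ otherwise, with the multiplication maps induced by the simplicial restriction maps $f_\bullet$ on links. Since $\dim \Bbbk[\Gamma] = d+1$, the top local cohomology $H^{d+1}_{\mm}(\Bbbk[\Gamma])$ is the one relevant for duality; its $-U$ component is $\widetilde H_{d-|s(U)|}(\lk_\Gamma(s(U));\Bbbk)$.

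Next I would invoke the graded local duality theorem (as in \cite[Section 3.6]{BruHe93}): $\omega_{\Bbbk[\Gamma]}$ is the graded $\Bbbk$-dual (Matlis dual) of $H^{d+1}_{\mm}(\Bbbk[\Gamma])$, with the grading reversed. Dualizing the Hochster decomposition multidegree by multidegree gives, in multidegree $U$, the space $\widetilde H_{d-|s(U)|}(\lk_\Gamma(s(U));\Bbbk)^{\vee}$. At this point I would apply the isomorphism \eqref{Eq 1} from Gräbe, namely $\widetilde H_{i-|\sigma|}(\lk_\Gamma(\sigma);\Bbbk) \cong H_i(\Gamma,\cost_\Gamma(\sigma);\Bbbk)$ with $i = d$ and $\sigma = s(U)$, to rewrite each summand as $H_d(\Gamma,\cost_\Gamma(s(U));\Bbbk)^{\vee}$. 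Since for a complex of dimension $d$ the chain groups in degree $d$ are finite-dimensional and the relative homology $H_d(\Gamma,\cost_\Gamma(\sigma);\Bbbk)$ sits at the top, it is naturally self-dual in the sense needed (a top homology of a finite complex is canonically identified with a corresponding cohomology), so one can drop the dual and obtain the stated direct sum $\bigoplus_{U \in \NN^r,\, s(U) \in \Gamma} H_d(\Gamma,\cost_\Gamma(s(U));\Bbbk)$.

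Finally I would check that the $\Bbbk[\Gamma]$-module structure is the asserted one: multiplication by $x_W$ on $\omega_{\Bbbk[\Gamma]}$ is dual to the multiplication map $H^{d+1}_{\mm}(\Bbbk[\Gamma])_{-U-W} \to H^{d+1}_{\mm}(\Bbbk[\Gamma])_{-U}$, which under Hochster's description is the corestriction/restriction map between links, i.e. corresponds under \eqref{Eq 1} to $f_d(s(U),s(U+W))$ when $s(U+W) \in \Gamma$ and is the zero map otherwise (in particular when $s(U+W) \notin \Gamma$ the target local cohomology component vanishes). I expect the main obstacle to be bookkeeping the degree shifts and sign/direction conventions: one must be careful that the grading reversal in Matlis duality together with the indexing of Hochster's formula produces exactly the positive cone $U \in \NN^r$ with $s(U) \in \Gamma$ (and not some shifted cone), and that the restriction maps $f_d(\sigma,\tau)$ — which go in the direction of increasing faces — match the module multiplication after dualization rather than its transpose. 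Verifying this compatibility of the maps, rather than just the graded-vector-space isomorphism, is the delicate point.
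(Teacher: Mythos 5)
This theorem is not proved in the paper at all: it is imported verbatim from Gr\"abe \cite[Theorem 4]{Gr84a}, so there is no internal argument to compare yours against. Judged on its own, your outline is the standard (and correct) route to this result: Hochster's $\ZZ^r$-graded decomposition of $H^{d+1}_{\mm}(\Bbbk[\Gamma])$, graded local duality to pass to $\omega_{\Bbbk[\Gamma]}$, and Gr\"abe's identification \eqref{Eq 1} of link homology with the relative homology of $(\Gamma,\cost_\Gamma(\sigma))$, which is precisely what makes the module structure expressible through the maps $f_d(\sigma,\tau)$ induced by the inclusions $\cost_\Gamma(\tau)\subseteq\cost_\Gamma(\sigma)$. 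Two small corrections to the bookkeeping you yourself flag as delicate: Hochster's formula gives the component of $H^i_{\mm}(\Bbbk[\Gamma])$ in degree $-U$ as reduced simplicial \emph{cohomology} $\Hr^{\,i-|s(U)|-1}(\lk_\Gamma(s(U));\Bbbk)$, not homology; if you use that form, the Matlis dual lands directly in $\Hr_{d-|s(U)|}(\lk_\Gamma(s(U));\Bbbk)$ by universal coefficients over a field, and \eqref{Eq 1} then gives $H_d(\Gamma,\cost_\Gamma(s(U));\Bbbk)$ with no residual dual to remove. This lets you drop the appeal to a ``canonical self-duality of top homology,'' which as stated is not a real canonical identification (a finite-dimensional vector space is only abstractly isomorphic to its dual) and is the one genuinely shaky sentence in your sketch. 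With that adjustment, the remaining task is exactly the one you identify --- checking that dualizing the multiplication $H^{d+1}_{\mm}(\Bbbk[\Gamma])_{-U-W}\to H^{d+1}_{\mm}(\Bbbk[\Gamma])_{-U}$ produces $f_d(s(U),s(U+W))$ and the zero map when $s(U+W)\notin\Gamma$ --- and that is a routine verification once the gradings are set up as above.
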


In our case, the simplicial complex $\Delta$ is also a quasimanifold. Another theorem of Gr\"abe (see \cite[Hauptlemma 3.2]{Gr84b}) tells us an interesting fact about the canonical module in this case.

\begin{theorem}\label{Grabe injective}
Let $\Gamma$ be a connected $d$-quasimanifold, then all the maps $f_d$ of the previous theorem are injective. In particular, if $\sigma = s(U)$ and $\tau =s(U+W),$

$$\dm_\Bbbk(H_d(\Gamma,\cost_\Gamma(\sigma);\Bbbk)) \leq \dm_\Bbbk(H_d(\Gamma,\cost_\Gamma(\tau);\Bbbk))$$
\end{theorem}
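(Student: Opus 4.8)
The statement is Gräbe's "Hauptlemma": for a connected $d$-quasimanifold $\Gamma$, all the restriction maps $f_d(\sigma,\tau)$ of Theorem \ref{Grabe canonical} are injective, and consequently $\dm_\Bbbk H_d(\Gamma,\cost_\Gamma(\sigma);\Bbbk) \leq \dm_\Bbbk H_d(\Gamma,\cost_\Gamma(\tau);\Bbbk)$ whenever $\sigma \subseteq \tau$ are both faces. Since the second assertion is immediate from injectivity (an injection of finite-dimensional vector spaces can only raise dimension), the whole content is the injectivity of $f_d$. By the isomorphism \eqref{Eq 1}, $H_d(\Gamma,\cost_\Gamma(\sigma);\Bbbk) \cong \Hr_{d-|\sigma|}(\lk_\Gamma(\sigma);\Bbbk)$; and since $\Gamma$ is a pure $d$-quasimanifold, $\lk_\Gamma(\sigma)$ is itself a $(d-|\sigma|)$-quasimanifold, so the relevant homology sits in the \emph{top} degree. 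The plan is therefore to reduce, by induction on $|\tau|-|\sigma|$, to the case $\tau = \sigma \cup \{v\}$ with $v \notin \sigma$, and then to identify $f_d(\sigma,\tau)$ under \eqref{Eq 1} with the map on top homology induced by passing from $\lk_\Gamma(\sigma)$ to $\lk_{\lk_\Gamma(\sigma)}(v) = \lk_\Gamma(\tau)$; replacing $\Gamma$ by $\lk_\Gamma(\sigma)$, it suffices to treat $\sigma = \emptyset$, i.e. to show that for every vertex $v$ the map $\Hr_d(\Gamma;\Bbbk) \to \Hr_{d-1}(\lk_\Gamma(v);\Bbbk)$ dual/adjacent to the inclusion is injective.

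First I would set up the topological model. For a quasimanifold $\Gamma$, top homology is governed by a ``fundamental-class'' count: a $d$-cycle is a $\Bbbk$-linear combination of facets whose boundary cancels along every $(d-1)$-face; because each $(d-1)$-face lies in one or two facets (property (a)), the cancellation condition is a system of linear relations pairing the (at most two) facets through each ridge, and connectedness (property (b) at $\sigma=\emptyset$ gives connectedness of $\Gamma$, the higher-codimension links being handled inductively) forces a $d$-cycle to have, up to scalar, a determined support. I would make precise the standard fact that $\Hr_d(\Gamma;\Bbbk)$ is at most one-dimensional and is spanned by a cycle supported on the set of ``interior'' facets — more usefully, I would phrase it as: a nonzero class in $H_d(\Gamma,\cost_\Gamma(\sigma);\Bbbk)$ is represented by a chain $c$ none of whose facets lies in $\cost_\Gamma(\sigma)$ (that is, every facet in the support contains $\sigma$), and the image under $f_d(\sigma,\tau)$ is the same chain read modulo $\widetilde C_d(\cost_\Gamma(\tau))$.

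The heart is then: if $c$ is a relative $d$-cycle mod $\cost_\Gamma(\sigma)$ that becomes a boundary mod $\cost_\Gamma(\tau)$, show $c$ was already a boundary mod $\cost_\Gamma(\sigma)$. Unwinding: $c = \partial b + c'$ with $c' \in \widetilde C_d(\cost_\Gamma(\tau))$, i.e. every facet in $\mathrm{supp}(c')$ avoids $\tau = \sigma \cup \{v\}$; but facets in $\mathrm{supp}(c)$ all contain $\sigma$, and since $\partial b$ contributes only facets that are $d$-faces of $\Gamma$, one analyzes which facets through $\sigma$ but \emph{not} through $v$ can appear. Translating through \eqref{Eq 1} to the link picture with $\sigma=\emptyset$: I want that a $d$-cycle of $\Gamma$ whose restriction to $\mathrm{star}_\Gamma(v)$ (equivalently whose image in $\Hr_{d-1}(\lk_\Gamma(v))$, via the connecting map of the pair $(\mathrm{star},\lk)$ or the collapse $\Gamma/\cost_\Gamma(v)$) vanishes must itself vanish. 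Since $\Hr_d(\Gamma) \hookrightarrow$ is at most $1$-dimensional and a nonzero fundamental cycle $z$ necessarily uses facets containing $v$ (by connectedness, its support is not confined to $\cost_\Gamma(v)$: if it were, capping off would contradict that every ridge of $\Gamma$ meets $\le 2$ facets and the support is ``all or a boundary-complementary part''), the image of $z$ in the quotient complex $\Gamma/\cost_\Gamma(v)$ — which is what $f_d(\emptyset,v)$ computes — is nonzero. This "a nonzero top cycle cannot be supported away from any given vertex" is the one genuinely combinatorial input, and it is exactly where quasimanifoldness (each ridge in $\le 2$ facets, all proper links connected) is used; I would prove it by a connectedness walk: starting from any facet in the support, move across ridges to a neighboring facet, which by the cycle condition must also be in the support with a forced coefficient, and iterate — the ``dual graph'' of facets-through-a-fixed-codim-1-stratum being connected lets this walk reach a facet containing $v$.

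The main obstacle I expect is bookkeeping the identification of Gräbe's combinatorially-defined $f_d$ (restriction of chains along $\cost_\Gamma(\sigma) \subseteq \cost_\Gamma(\tau)$, $\sigma\subseteq\tau$) with the topological maps on links under \eqref{Eq 1}, and making the induction on $|\tau \setminus \sigma|$ compatible with replacing $\Gamma$ by a link (one must check $\lk_\Gamma(\sigma)$ is again a connected quasimanifold of the right dimension — connectedness of proper links is property (b), the quasimanifold axioms for links follow from $\lk_{\lk_\Gamma(\sigma)}(\rho) = \lk_\Gamma(\sigma\cup\rho)$, and dimension drops by $|\sigma|$ by purity). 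Everything else — that an injection of finite-dimensional spaces gives the dimension inequality, that relative top homology lives only in the stated degree — is formal. Once the single-vertex link case is settled, the general $f_d(\sigma,\tau)$ factors as a composite of single-vertex maps $f_d(\sigma,\sigma\cup\{v_1\}), f_d(\sigma\cup\{v_1\},\sigma\cup\{v_1,v_2\}),\dots$ and a composite of injections is injective, completing the proof.
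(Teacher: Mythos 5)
The paper does not actually prove this statement: it is quoted verbatim from Gr\"abe (\cite[Hauptlemma 3.2]{Gr84b}), so there is no internal proof to compare yours against. Judged on its own, your sketch is the standard argument and is essentially correct, but let me flag what still needs to be nailed down. First, you can discard the ``$c=\partial b+c'$'' analysis entirely: in top degree $d$ there are no $(d+1)$-chains, so $H_d(\Gamma,\cost_\Gamma(\tau);\Bbbk)$ is simply the space of relative $d$-cycles modulo $C_d(\cost_\Gamma(\tau))$, and injectivity of $f_d(\sigma,\tau)$ reduces to the purely combinatorial claim that a nonzero chain $c=\sum a_F F$, supported on $d$-faces containing $\sigma$ and with $\partial c$ supported on $\cost_\Gamma(\sigma)$, must have $a_F\neq 0$ for some $F\supseteq\tau$. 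The cycle condition at each $(d-1)$-face $\rho\supseteq\sigma$ pins $a_F$ and $a_G$ to each other (or kills $a_F$ if $\rho$ lies in one facet), so the support of a nonzero $c$ is a union of components of the dual graph of $\lk_\Gamma(\sigma)$; since $\tau\in\Gamma$ lies in some facet, one only needs that dual graph to be connected. This also shows you can treat arbitrary $\sigma\subseteq\tau$ in one step, with no need for the induction on $|\tau\smallsetminus\sigma|$, and the chain-level compatibility of $f_d$ with the isomorphism \eqref{Eq 1} that you list as the main obstacle is automatic, since both maps are induced by the identity on the relevant chain groups.

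The one genuine gap is that the connectedness of this dual graph --- i.e.\ the \emph{strong} connectedness of $\lk_\Gamma(\sigma)$ --- is asserted (``the dual graph \dots being connected lets this walk reach a facet containing $v$'') but never proved, and it is precisely here that axiom (b) of the quasimanifold definition is consumed. Mere connectedness of a pure complex does not imply strong connectedness (two triangles glued at a vertex), so you must run the standard induction on dimension: $\lk_\Gamma(\sigma)$ is again a quasimanifold because $\lk_{\lk_\Gamma(\sigma)}(\rho)=\lk_\Gamma(\sigma\cup\rho)$, its proper links are connected by (b), and a connected quasimanifold all of whose vertex links are strongly connected is strongly connected. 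With that lemma supplied, your argument closes.
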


Notice that our simplicial complex $\Delta$ is a Cohen-Macaulay (which implies connected) quasimanifold, thus it satifies the hypothesis of the latter theorem. Using the isomorphism \ref{Eq 1} we have that, for $\sigma \subseteq \tau \in \Delta,$
$$\dm_\Bbbk(\Hr_{\dm(\lk_\Delta(\sigma))}(\lk_\Delta(\sigma);\Bbbk)) \leq \dm_\Bbbk(\Hr_{\dm(\lk_\Delta(\tau))}(\lk_\Delta(\tau);\Bbbk)),$$

where $\dm(\lk_\Delta(\sigma))=n-2-|\sigma|$ and $\dm(\lk_\Delta(\tau))=n-2-|\tau|$ hold since $\Delta$ is pure.

\begin{notation}
From now on, for $\sigma \in \Delta,$ we set $d_\sigma \coloneqq \dm_\Bbbk(\Hr_{\dm(\lk_\Delta(\sigma))}(\lk_\Delta(\sigma);\Bbbk)).$
\end{notation}

\begin{remark}
Let be $\sigma$ be a facet of $\Delta,$ then $\lk_\Delta(\sigma)=\{\emptyset\}$ and so we have $d_\sigma = \dm_\Bbbk(\Hr_{-1}(\{\emptyset\};\Bbbk)) = 1.$\\
Let $\sigma \in \Delta$ of codimension $1$ (i.e. $\dm(\sigma)=n-3$), then $\lk_\Delta(\sigma)$ is either one or two points and so

$$d_\sigma = \dm_\Bbbk(\Hr_0(\lk_\Delta(\sigma);\Bbbk)) = \begin{cases}
	0 & \text{if } \lk_\Delta(\sigma) \text{ is one point} \\
	1 & \text{if } \lk_\Delta(\sigma) \text{ are two points}
\end{cases}.$$
Since we are studying the rectangular case, by Remark \ref{Boundary faces}, there exists always $\sigma$ a $(n-3)$-face of $\Delta$ such that $d_\sigma = 0,$ then $d_{\{\emptyset\}} \leq d_\sigma = 0.$\\
Now let $\{\emptyset\} = \sigma_0 \subset \sigma_1 \subset \dots \subset \sigma_{n-1}$ be a sequence of faces in $\Delta$ such that $|\sigma_i|=i,$ then we have $0 = d_{\sigma_0} \leq d_{\sigma_1} \leq \dots \leq d_{\sigma_{n-1}} = 1.$ That means that there exists $k \in [n-1]$ such that $d_{\sigma_i}$ is $0$ if $i<k$ and $1$ if $i \geq k.$ If we consider the $\NN$-grading on $\omega_{\Bbbk[\Delta]},$ we have that $H_{n-2}(\Delta,\cost_\Gamma(\sigma_i);\Bbbk),$ with $i \geq k,$ is a one-dimensional $\Bbbk$-vector subspace of $(\omega_{\Bbbk[\Delta]})_i.$ So we can think about such $\sigma_i$ as a generator of $\omega_{\Bbbk[\Delta]}$ in degree $i.$ We would like to show that the generators which are minimal with respect to the inclusion have cardinality $n-m$ and so, by Remark \ref{Canonical level}, the levelness of $\Delta.$
\end{remark}

Now let us prove two lemmas before stating the main theorem.

\begin{lemma}\label{Lemma 1}
The minimum non-zero degree of $\omega_{\Bbbk[\Delta]}$ is $n-m.$ Moreover it holds $\dm_\Bbbk((\omega_{\Bbbk[\Delta]})_{n-m}) = \binom{n-1}{m-1}.$
\end{lemma}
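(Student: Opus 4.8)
The plan is to translate the statement about $\omega_{\Bbbk[\Delta]}$ into a purely combinatorial statement about faces of $\Delta$ via Theorem \ref{Grabe canonical}. By the discussion preceding the lemma, the $\NN$-graded piece $(\omega_{\Bbbk[\Delta]})_j$ has dimension equal to the number of faces $\sigma \in \Delta$ with $|\sigma| = j$ and $d_\sigma = 1$ — more precisely, $(\omega_{\Bbbk[\Delta]})_j \cong \bigoplus_{\sigma \in \Delta, |\sigma| = j} \Hr_{n-2-|\sigma|}(\lk_\Delta(\sigma);\Bbbk)$, and each summand is at most one-dimensional since $\Delta$ is a quasimanifold with all links of codimension-$1$ faces equal to one or two points (and, by injectivity from Theorem \ref{Grabe injective}, once $d_\sigma = 1$ it stays $1$ on any face containing $\sigma$, while $d_\sigma \le 1$ always). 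So I must show: (i) no face $\sigma$ with $|\sigma| < n-m$ has $d_\sigma = 1$, i.e. every such $\sigma$ has $\lk_\Delta(\sigma)$ either disconnected-killing... rather, has $\Hr_{\dm \lk}(\lk_\Delta(\sigma)) = 0$; and (ii) there are exactly $\binom{n-1}{m-1}$ faces $\sigma$ with $|\sigma| = n-m$ and $d_\sigma = 1$.

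For the lower bound on the minimal degree, I would argue that if $|\sigma| = j < n-m$, then $r^\sigma \cup c^\sigma$ is a proper subset of $[n]$ missing at least $m+1-j > m$ ... let me instead count: since $\sigma$ is contained in some facet $F$, which is a lattice path, and $|F| = n-1$, removing $|F| - j = n-1-j$ vertices from $F$. I would show $d_\sigma = 1$ forces $\sigma$ to meet every row and all but at most one column, or something in that spirit; concretely, I expect the right statement to be that $d_\sigma = 1$ iff $\sigma$ cannot be extended to a facet using a vertex in a "free" row (which requires a free column among $1,\dots,m$), so that the obstruction faces are precisely those whose uncovered indices all lie in columns $> m$. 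A face with $|\sigma| < n-m$ leaves at least $n-1-j \ge m$ indices of $[n]$ uncovered when completed to a path... I would make the bookkeeping precise by noting $|r^\sigma| + |c^\sigma| \le |\sigma| + 1$ is false in general — rather $|r^\sigma| + |c^\sigma| \ge$ something — and track which rows/columns the completing path must use. The cleanest route: show directly from Remark \ref{Boundary faces}-style reasoning that $\lk_\Delta(\sigma)$ is contractible (a cone, or collapsible) whenever some row $\le m$ or column $\le m$ index is "available", which happens for all $|\sigma| < n-m$, killing $\Hr_{top}$.

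For the exact count in degree $n-m$, I would identify the faces $\sigma$ with $|\sigma| = n-m$ and $d_\sigma = 1$ combinatorially. A face of size $n-m$ inside a path of size $n-1$; I expect these minimal-degree generators to be in bijection with certain subsets determining which $m-1$ of the $n-1$ "steps" are chosen, hence the binomial $\binom{n-1}{m-1}$ — the guiding picture being Remark \ref{Boundary faces}, where the single explicit boundary face comes from $F = \{(1,k): k \in \{2,\dots,n\}\}$ with $(1,n)$ removed. The main obstacle, I expect, is step (ii): precisely characterizing the codimension-$(n-m)$ faces with $d_\sigma = 1$ and setting up the bijection with a set of size $\binom{n-1}{m-1}$, since this requires understanding $\Hr_{top}(\lk_\Delta(\sigma))$ for faces that are far from being facets — one must verify the link is a quasimanifold-ball-or-sphere of the right dimension and decide which case occurs. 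I would handle this by computing $\lk_\Delta(\sigma)$ explicitly as (a join of a simplex with) the analogous complex $\Delta$ for a smaller matrix obtained by deleting the rows/columns used by $\sigma$, reducing to the base case where $\sigma$ covers $m-1$ rows and $1$ column (forcing the "small" matrix to be $1 \times (n-m+1)$-ish), and then counting the ways to choose such $\sigma$.
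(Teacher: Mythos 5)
Your plan takes a genuinely different route from the paper, but it contains a gap at exactly the point you yourself flag as "the main obstacle," and that gap is not a technicality: it is the hard computation that the paper's entire architecture is built to avoid. The paper proves this lemma purely algebraically, with no combinatorics of $\Delta$ at all: it recalls that $a(S/I_2)=-n$, transfers the $a$-invariant and the Hilbert series to the square-free initial ideal $I$, uses the known Hilbert function of the determinantal ring to get $\dm_\Bbbk((\omega_{S/I})_n)=\dm_\Bbbk((S/(I+(\underline{x})))_{m-1})=\binom{n-1}{m-1}$ via Remark \ref{Canonical level}, and then shifts degrees by the regular sequence of the $m$ diagonal variables to land on $(\omega_{\Bbbk[\Delta]})_{n-m}=(\omega_{S/I})_n$. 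This dimension count is then the \emph{input} that lets Theorem \ref{Main} conclude $G_{n-m}=C$ from the mere inclusion $G_{n-m}\subseteq C$ together with $|C|=\binom{n-1}{m-1}$; at no point does the paper verify directly that a given face $\sigma\in C$ satisfies $H_{n-2}(\Delta,\cost_\Delta(\sigma);\Bbbk)\neq 0$.

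Your step (ii) requires precisely that verification: you must show that each of the $\binom{n-1}{m-1}$ candidate faces of cardinality $n-m$ (those not contained in any boundary face) has nonvanishing top homology of its link, and Gr\"abe's injectivity only gives the inequality in the wrong direction for this ($d_\sigma\leq d_\tau$ for $\sigma\subseteq\tau$ yields upper bounds on $d_\sigma$, never lower bounds). Your proposed reduction — identifying $\lk_\Delta(\sigma)$ as a join of a simplex with a smaller complex of the same type and deciding whether it is a ball or a sphere — is plausible but is left entirely unexecuted, and it is not routine: one would need to prove a structural result about links of arbitrary codimension-$(n-m)$ faces comparable in difficulty to Proposition \ref{Delta quasimanifold} itself. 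Step (i) of your plan is recoverable (the argument that any $\sigma$ with $|\sigma|<n-m$ sits inside some $\tau\in\textup{Bd}$, whence $d_\sigma\leq d_\tau=0$, is exactly what the paper uses later), but as written your justification there is also only a gesture. In short: the approach could in principle work, but the decisive counting step is missing, and the paper's $a$-invariant/Hilbert-function argument is both shorter and the reason the rest of Section \ref{levelness} goes through.
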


\begin{proof}
Recall that the $a$-invariant of an $\NN$-graded Cohen-Macaulay $\Bbbk$-algebra $R$ is the number $a(R) \coloneqq -\min\{i \in \NN: (\omega_R)_i \neq 0\}.$ It is well known (see \cite[Corollary 1.5]{BruHe92}) that $a(S/I_2)=-n.$ Since $I$ is square-free, by \cite[Corollary 2.3.8]{BruCoRaVa}, $S/I$ is Cohen-Macaulay and $a(S/I)=-n.$ Since dim$(S/I)=\textup{dim}(S/I_2)=n+m-1$, if $\underline{x}$ is a linear system of parameters of $S/I$, it holds
$$\textup{dim}_{\Bbbk}((\omega_{S/I})_n) = \textup{dim}_{\Bbbk}((S/(I+(\underline{x})))_{m-1}) = \binom{n-1}{m-1}.$$
The first equality follows from the final part of Remark \ref{Canonical level} and the second one from \cite[Proposition 4.1.9(c)]{BruCoRaVa}.\\
The conclusion follows because, by \cite[Corollary 3.6.14]{BruHe93}, $(\omega_{\Bbbk[\Delta]})_{n-m} = (\omega_{S/I})_n.$\\
\end{proof}

This lemma tells us that $\omega_{\Bbbk[\Delta]}$ has no generator below degree $n-m,$ however there could be some minimal generator in degree higher than $n-m.$ In the theorem we will show that this is not the case.

\begin{lemma}\label{Lemma 2}
Let \textup{Bd} $\coloneqq \{\sigma \in \Delta : \dm(\sigma)=n-3 \hspace{.1cm} \wedge \hspace{.1cm} H_{n-2}(\Delta,\cost_\Delta(\sigma);\Bbbk) = 0\}$ and $C \coloneqq \{\varepsilon \in \Delta : \nexists_{\sigma \in \textup{Bd}}(\varepsilon \subseteq \sigma) \wedge \forall_{\delta \subsetneq \varepsilon}(\exists_{\sigma \in \textup{Bd}}(\delta \subseteq \sigma))\}.$ Then
$$C=\{\{(i_1,m+1),(i_2,m+2),\dots,(i_{n-m},n)\} : \forall_{j \in [n-m-1]}(1 \leq i_j \leq i_{j+1} \leq m)\}.$$
Moreover, $|C|=\binom{n-1}{m-1}.$
\end{lemma}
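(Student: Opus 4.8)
The plan is to carry out the computation of $C$ in two stages: first, identify the set $\mathrm{Bd}$ explicitly as the $(n-3)$-faces whose row-column support omits exactly one column index $l>m$; second, analyze which faces of $\Delta$ are \emph{not} contained in any member of $\mathrm{Bd}$ but become so after removing a single vertex. For the first stage, Remark \ref{Boundary faces} already gives the characterization: a codimension-$1$ face $\sigma$ lies in $\mathrm{Bd}$ (i.e.\ $H_{n-2}(\Delta,\cost_\Delta(\sigma);\Bbbk)=0$, equivalently $\lk_\Delta(\sigma)$ is a single point) if and only if $r^\sigma \cup c^\sigma = [n]\smallsetminus\{l\}$ for some $l>m$. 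So I would begin by restating this and observing that such $\sigma$ is obtained from a facet $F$ by deleting an horizontal vertex sitting in column $l$, with $m<l\le n$.

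Next I would describe, for a general face $\varepsilon\in\Delta$, when $\varepsilon$ fails to be contained in \emph{any} $\sigma\in\mathrm{Bd}$. Since $\sigma\in\mathrm{Bd}$ is missing exactly one column $l>m$ from $r^\sigma\cup c^\sigma$, a face $\varepsilon$ with $r^\varepsilon\cup c^\varepsilon$ meeting all of $\{m+1,\dots,n\}$ except possibly one index, and whose remaining combinatorial data (the path shape) can be completed to such a $\sigma$, is the kind that sits inside some boundary face. Conversely, $\varepsilon$ is \emph{not} inside any $\mathrm{Bd}$-face precisely when $\varepsilon$ already ``uses'' all columns $m+1,\dots,n$ in an essential way --- concretely, when $\varepsilon$ contains, for each column $j\in\{m+1,\dots,n\}$, a vertex in column $j$, and these are its only vertices (so $r^\varepsilon=\{i_1,\dots,i_{n-m}\}\subseteq[m]$ read off from those vertices, and $c^\varepsilon=\{m+1,\dots,n\}$). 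The path condition on facets of $\Delta$ (every vertex one step lower or one step right than the previous) forces the rows $i_1,\dots,i_{n-m}$ of the vertices in columns $m+1<\dots<n$ to be weakly increasing, $1\le i_1\le i_2\le\dots\le i_{n-m}\le m$, which is exactly the description of $C$ in the statement. I would then check the minimality condition: removing any single vertex $(i_t,m+t)$ from such an $\varepsilon$ frees column $m+t>m$, and the resulting face does embed in a boundary face (complete it to a facet and delete the horizontal vertex in that column), while no \emph{proper} face smaller than a point-deletion is needed --- so $\varepsilon\in C$; and conversely every element of $C$ arises this way.

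The final, purely combinatorial step is the count $|C|=\binom{n-1}{m-1}$. A weakly increasing sequence $1\le i_1\le\dots\le i_{n-m}\le m$ is a multiset of size $n-m$ from $[m]$, and the number of such is $\binom{m+(n-m)-1}{n-m}=\binom{n-1}{n-m}=\binom{n-1}{m-1}$, by the standard stars-and-bars identity. I expect the main obstacle to be the second step: rigorously proving that a face \emph{not} contained in any $\mathrm{Bd}$-face must have exactly the shape claimed (one vertex in each of the last $n-m$ columns, weakly increasing rows, and nothing else). This requires care with the facet-path structure of $\Delta$ established in Section \ref{to and simplicial complex} --- in particular ruling out that $\varepsilon$ could have vertices in columns $\le m$ or in rows that force extra column occupancy --- and handling the ``$\forall_{\delta\subsetneq\varepsilon}$'' minimality clause, which is what pins down the cardinality to be exactly $n-m$ rather than something larger. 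Lemma \ref{Lemma 1} is the sanity check that $|C|=\binom{n-1}{m-1}$ matches $\dm_\Bbbk((\omega_{\Bbbk[\Delta]})_{n-m})$, which is the reason this set will serve, in the main theorem, as the set of minimal generators of the canonical module.
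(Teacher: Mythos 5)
Your plan follows the paper's proof essentially step for step: characterize $\textup{Bd}$ via Remark \ref{Boundary faces} as the codimension-one faces missing exactly one column $l>m$, use the minimality clause to force $\varepsilon$ to consist of exactly one vertex in each of the last $n-m$ columns, read off the weakly increasing row condition from the facet-path structure, and count by stars and bars. The only caution is that your "precisely when" characterization of non-containment in a $\textup{Bd}$-face is really a characterization of $C$ (non-containment only requires occupying all columns $m+1,\dots,n$; the "and these are its only vertices" part is exactly the minimality clause, which you do handle separately), so the argument is correct once that is kept straight.
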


\begin{proof}
By Remark \ref{Boundary faces}, it follows that $\sigma \in$ Bd if and only if $\dm(\sigma)=n-3$ and there exists a unique $l \in \{m+1,\dots,n\}$ such that $l \notin c^\sigma.$\\
The definition of $C$ tell us that $\varepsilon \in C$ are the minimal faces of $\Delta$ that are not contained in any $\sigma \in$ Bd. Notice that, by what we just said, what appens in the first $m$ columns of $M$ does not affect the fact that $\varepsilon \subseteq \sigma \in$ Bd or not, then, by the minimality of $\varepsilon \in C,$ we have that all the vertices of $\varepsilon$ are contained in the last $n-m$ columns of $M.$ Now notice that $\varepsilon \in C$ must occupy all these columns and, again by minimality, $\varepsilon$ consists only of $n-m$ vertices, one for each column.\\
Summing up and remembering the conditions under which $\varepsilon$ is a face of $\Delta,$ we have that $\varepsilon = \{(i_1,m+1),\dots,(i_{n-m},n)\},$ for some $1 \leq i_1 \leq \dots \leq i_{n-m} \leq m.$\\
For the second part, it suffices to notice that $C$ is in bijection with the set of all possible non-decreasing sequences of length $n-m$ of numbers in $[m],$ then $|C|=\binom{m+(n-m)-1}{n-m}=\binom{n-1}{n-m}=\binom{n-1}{m-1}.$\\
\end{proof}

\begin{figure}[h]
\begin{center}
\begin{tikzpicture}[scale=1.5]
\draw [fill=black] (0,0) circle (1pt);
\draw [fill=black] (.5,0) circle (1pt);
\draw [fill=black] (1,0) circle (1pt);
\draw [fill=black] (1.5,0) circle (1pt);
\draw [black] (2,0) circle (1pt);
\draw [fill=black] (2.5,0) circle (1pt);
\draw [fill=black] (3,0) circle (1pt);
\draw [fill=black] (3.5,0) circle (1pt);
\draw [fill=black] (0,.5) circle (1pt);
\draw [fill=black] (.5,.5) circle (1pt);
\draw [fill=black] (1,.5) circle (1pt);
\draw [black] (1.5,.5) circle (1pt);
\draw [fill=black] (2,.5) circle (1pt);
\draw [fill=black] (2.5,.5) circle (1pt);
\draw [fill=black] (3,.5) circle (1pt);
\draw [fill=black] (4,.5) circle (1pt);
\draw [fill=black] (0,1) circle (1pt);
\draw [fill=black] (.5,1) circle (1pt);
\draw [black] (1,1) circle (1pt);
\draw [fill=black] (1.5,1) circle (1pt);
\draw [fill=black] (2,1) circle (1pt);
\draw [fill=black] (2.5,1) circle (1pt);
\draw [fill=black] (3,1) circle (1pt);
\draw [fill=black] (3.5,1) circle (1pt);
\draw [fill=black] (4,1) circle (1pt);
\draw [fill=black] (0,1.5) circle (1pt);
\draw [black] (.5,1.5) circle (1pt);
\draw [fill=black] (1,1.5) circle (1pt);
\draw [fill=black] (1.5,1.5) circle (1pt);
\draw [fill=black] (2,1.5) circle (1pt);
\draw [fill=black] (3.5,1.5) circle (1pt);
\draw [fill=black] (4,1.5) circle (1pt);
\draw [black] (0,2) circle (1pt);
\draw [fill=black] (.5,2) circle (1pt);
\draw [fill=black] (1,2) circle (1pt);
\draw [fill=black] (1.5,2) circle (1pt);
\draw [fill=black] (2,2) circle (1pt);
\draw [fill=black] (2.5,2) circle (1pt);
\draw [fill=black] (3,2) circle (1pt);
\draw [fill=black] (3.5,2) circle (1pt);
\draw [fill=black] (4,2) circle (1pt);
\node [black,above,text=red] at (0,-.5) {$1$};
\node [black,above,text=red] at (.5,-.5) {$2$};
\node [black,above,text=red] at (1,-.5) {$3$};
\node [black,above,text=red] at (1.5,-.5) {$4$};
\node [black,above,text=red] at (2,-.5) {$5$};
\node [black,above,text=red] at (2.5,-.5) {$6$};
\node [black,above,text=red] at (3,-.5) {$7$};
\node [black,above,text=red] at (3.5,-.5) {$8$};
\node [black,above,text=red] at (4,-.5) {$9$};
\node [black,right,text=red] at (-.5,0) {$5$};
\node [black,right,text=red] at (-.5,.5) {$4$};
\node [black,right,text=red] at (-.5,1) {$3$};
\node [black,right,text=red] at (-.5,1.5) {$2$};
\node [black,right,text=red] at (-.5,2) {$1$};
\draw (2.5,1.5) node[cross=5pt,line width=2pt] {};
\draw (3,1.5) node[cross=5pt,line width=2pt] {};
\draw (3.5,.5) node[cross=5pt,line width=2pt] {};
\draw (4,0) node[cross=5pt,line width=2pt] {};
\draw [dashed] (2.25,-.1)--(2.25,2.1);
\end{tikzpicture}
\caption{Case $m=5,$ $n=9;$ $\sigma = \{(2,6),(2,7),(4,8),(5,9)\}$ is an example of an element of C.}
\label{Element of C}
\end{center}
\end{figure}
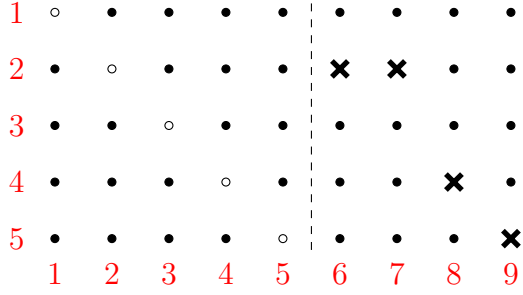

Let $G \coloneqq \{\sigma \in \Delta : H_{n-2}(\Delta,\cost_\Delta(\sigma);\Bbbk) \neq 0 \wedge \forall_{\varepsilon \subsetneq \sigma}(H_{n-2}(\Delta,\cost_\Delta(\varepsilon);\Bbbk) = 0)\}$ the set of the faces corresponding to the minimal generators of $\omega_{\Bbbk[\Delta]}.$ We want to prove the following theorem.

\begin{theorem}\label{Main}
For all $\sigma \in G,$ it holds $|\sigma|=n-m.$ In particular, $\Delta$ is level.
\end{theorem}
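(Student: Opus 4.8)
The plan is to prove that $G=C$, with $C$ the set described in Lemma~\ref{Lemma 2}. Granting this, since every element of $C$ has cardinality $n-m$ we obtain $|\sigma|=n-m$ for all $\sigma\in G$ at once; and since, as recalled just before the statement, the minimal generators of $\omega_{\Bbbk[\Delta]}$ correspond exactly to the faces $\sigma\in G$, each contributing a generator in $\NN$-degree $|\sigma|$ (via Theorem~\ref{Grabe canonical}, the component $H_{n-2}(\Delta,\cost_\Delta(\sigma);\Bbbk)$ indexed by the indicator vector of $\sigma$ living in degree $|\sigma|$, together with the injectivity of the $f_{n-2}$ from Theorem~\ref{Grabe injective} and the fact that these homologies are at most one-dimensional), the module $\omega_{\Bbbk[\Delta]}$ is generated in the single degree $n-m$. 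By Remark~\ref{Canonical level} this makes $\Bbbk[\Delta]$ level, hence also $S/I$, which differs from it by the regular sequence $\underline s$.

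I use throughout that, via the isomorphism~\eqref{Eq 1} and the purity of $\Delta$, one has $d_\sigma=\dm_\Bbbk H_{n-2}(\Delta,\cost_\Delta(\sigma);\Bbbk)\in\{0,1\}$, and that $d$ is monotone along inclusions, i.e. $\sigma\subseteq\tau\Rightarrow d_\sigma\le d_\tau$, by Theorem~\ref{Grabe injective} (applicable since $\Delta$ is a connected quasimanifold by Proposition~\ref{Delta quasimanifold} and Reisner's criterion). The first step is the easy half of a key equivalence: by Remark~\ref{Boundary faces}, $\textup{Bd}$ is precisely the set of $(n-3)$-faces $\sigma$ with $d_\sigma=0$, so any face contained in some $\tau\in\textup{Bd}$ has $d_\sigma\le d_\tau=0$. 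Hence $\{\sigma\in\Delta:d_\sigma=1\}$ is contained in the up-set $\mathcal U:=\{\sigma\in\Delta:\sigma\not\subseteq\tau\ \text{for every}\ \tau\in\textup{Bd}\}$, whose minimal faces are, by the very definition of $C$, the elements of $C$. In particular, $d_\sigma=1$ forces $\sigma$ to contain some $\varepsilon\in C$, so $|\sigma|\ge n-m$ by Lemma~\ref{Lemma 2}.

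The heart of the argument is a dimension count in degree $n-m$. By Theorem~\ref{Grabe canonical}, $\dm_\Bbbk(\omega_{\Bbbk[\Delta]})_{n-m}=\sum_{U\in\NN^V,\ |U|=n-m,\ s(U)\in\Delta} d_{s(U)}=\sum_{\sigma\in\Delta,\ d_\sigma=1}\binom{n-m-1}{|\sigma|-1}$, because the number of $U\in\NN^V$ with prescribed support $\sigma$ and $|U|=n-m$ equals the number of compositions of $n-m$ into $|\sigma|$ positive parts. By the previous paragraph every contributing $\sigma$ has $|\sigma|\ge n-m$, so only those with $|\sigma|=n-m$ count, each contributing $1$; thus $\dm_\Bbbk(\omega_{\Bbbk[\Delta]})_{n-m}=\#\{\sigma\in\Delta:d_\sigma=1,\ |\sigma|=n-m\}$, which equals $\binom{n-1}{m-1}=|C|$ by Lemmas~\ref{Lemma 1} and~\ref{Lemma 2}. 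On the other hand, any $\sigma$ with $d_\sigma=1$ and $|\sigma|=n-m$ lies in $\mathcal U$, hence contains some $\varepsilon\in C$, and $|\varepsilon|=n-m=|\sigma|$ forces $\sigma=\varepsilon$; so $\{\sigma:d_\sigma=1,\ |\sigma|=n-m\}\subseteq C$, and by the cardinality equality it is all of $C$. In particular $d_\varepsilon=1$ for every $\varepsilon\in C$.

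This upgrades the easy half into the full equivalence $d_\sigma=0\iff\sigma\subseteq\tau$ for some $\tau\in\textup{Bd}$: indeed, if $\sigma$ is contained in no $\tau\in\textup{Bd}$, then $\sigma$ contains some $\varepsilon\in C$, whence $d_\sigma\ge d_\varepsilon=1$. Now $\sigma\in G$ means exactly $d_\sigma=1$ and $d_\delta=0$ for all $\delta\subsetneq\sigma$, which by the equivalence is exactly the condition ``$\sigma$ is contained in no $\tau\in\textup{Bd}$ while every $\delta\subsetneq\sigma$ is'', i.e. $\sigma\in C$. Hence $G=C$, and by Lemma~\ref{Lemma 2} every $\sigma\in G$ has $|\sigma|=n-m$; levelness then follows as in the first paragraph. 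The step I expect to require the most care is the bookkeeping of the $\NN$-grading in Theorem~\ref{Grabe canonical} (that the component indexed by $U$ sits in degree $|U|$, so that the count in degree $n-m$ produces exactly the binomial sum above); this is what converts the purely structural description of $G$ via $\mathcal U$ into the numerical match with $|C|$. A more hands-on alternative would be to verify $d_\varepsilon=1$ for $\varepsilon\in C$ by computing $\lk_\Delta(\varepsilon)$ and recognizing it as an $(m-2)$-sphere, but that would rely on more geometric information about $\Delta$ than the quasimanifold property used here, whereas the count above gives $d_\varepsilon=1$ for free.
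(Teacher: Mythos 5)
Your proof is correct and follows essentially the same route as the paper: show every face with nonvanishing top relative homology contains an element of $C$, match $\dim_\Bbbk(\omega_{\Bbbk[\Delta]})_{n-m}=\binom{n-1}{m-1}=|C|$ via Lemmas \ref{Lemma 1} and \ref{Lemma 2} to conclude $d_\varepsilon=1$ for all $\varepsilon\in C$, and then use this to exclude minimal generators in higher degree. Your bookkeeping of the non-square-free multidegrees $U$ in the degree-$(n-m)$ count is in fact slightly more explicit than the paper's, but the argument is the same.
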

\begin{proof}
Notice that, for all $\sigma \in C$ of Lemma \ref{Lemma 2}, we have that $|\sigma|=n-m;$  Our aim is to prove that indeed $G=C.$\\
Let be $G_i \coloneqq \{\sigma \in G : |\sigma|=i\}$ and $C_i \coloneqq \{\sigma \in C : |\sigma|=i\}.$ We know that $C_{n-m}=C,$ then $C_i=\emptyset$ if $i \neq n-m.$ Notice that, if $|\sigma|<n-m,$ then $\sigma$ is contained in some $\tau \in$ Bd, thus this implies that $H_{n-2}(\Delta,\cost_\Delta(\sigma);\Bbbk) = 0,$ then $\sigma \notin G:$ therefore, $G_i=\emptyset$ if $i<n-m.$\\
Now let us consider $\sigma \in G_{n-m}:$ since $H_{n-2}(\Delta,\cost_\Delta(\sigma);\Bbbk) \neq 0,$ $\sigma$ cannot be contained in any $\tau \in$ Bd and, if $\varepsilon \subsetneq \sigma,$ we have just seen that $\varepsilon$ must be contained in some $\tau \in$ Bd, therefore $\sigma \in C.$ Thus we have proved that $G_{n-m} \subseteq C.$\\
By Theorem \ref{Grabe canonical} and the previous lemmas, we know that $$|G_{n-m}| = \dm_\Bbbk((\omega_{\Bbbk[\Delta]})_{n-m}) = \binom{n-1}{m-1} = |C|,$$
then $G_{n-m}=C$ and $C = G_{n-m} \subseteq G.$\\
Now suppose that $G_i \neq \emptyset$ for some $i>n-m$ and let $\sigma \in G_i,$ then we have $H_{n-2}(\Delta,\cost_\Delta(\sigma);\Bbbk) \neq 0.$ Thus, for all $\tau \in$ Bd, it holds $\sigma \nsubseteq \tau,$ i.e. $\sigma$ occupies all the last $n-m$ columns of $M.$ Notice that we can find $\varepsilon \subsetneq \sigma$ such that $\varepsilon \in C$ (we just leave one vertex for each one of the last $n-m$ columns), therefore $\varepsilon \in G.$ We have found a contraddiction, since there cannot a be proper inclusion $\varepsilon \subsetneq \sigma$ between two elements of $G.$\\
Then, for all $i>n-m,$ it holds $G_i = \emptyset,$ which implies $G=C.$ Thus we have proved the levelness of $\Delta.$\\
\end{proof}

The latter proof helps us to relate the module structure of $\omega_{\Bbbk[\Delta]}$ with the combinatorial properties of $\Delta.$ This link allows us to give a complete description of $\omega_{\Bbbk[\Delta]}$ as $\Bbbk$-vector space.

\begin{corollary}
Let us consider $G' \coloneqq \{\sigma \in \Delta : H_{n-2}(\Delta,\cost_\Delta(\sigma);\Bbbk) \neq 0\}$ and $C' \coloneqq \{\varepsilon \in \Delta : \nexists_{\sigma \in \textup{Bd}}(\varepsilon \subseteq \sigma)\}.$ Then $C'=G'.$
\end{corollary}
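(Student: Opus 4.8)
The plan is to deduce the corollary from Theorem~\ref{Main} by exploiting the monotonicity of the invariants $d_\sigma$. Recall from the isomorphism~\ref{Eq 1} and the purity of $\Delta$ that $d_\sigma = \dm_\Bbbk(H_{n-2}(\Delta,\cost_\Delta(\sigma);\Bbbk))$ for every $\sigma \in \Delta$, so that $G' = \{\sigma \in \Delta : d_\sigma \neq 0\}$. I would first isolate the two facts on which everything rests: \emph{(i)} for any face $\varepsilon \in \Delta$ and any facet $F \supseteq \varepsilon$, Theorem~\ref{Grabe injective} gives $d_\varepsilon \le d_F = 1$, so $d_\varepsilon \in \{0,1\}$ and $\varepsilon \in G'$ is equivalent to $d_\varepsilon = 1$; \emph{(ii)} a face $\tau$ lies in $\textup{Bd}$ exactly when $\dm(\tau)=n-3$ and $d_\tau=0$, and, by its defining condition in Lemma~\ref{Lemma 2}, $C$ is precisely the set of inclusion-minimal faces of $\Delta$ contained in no member of $\textup{Bd}$; consequently every face lying in $C'$ (i.e.\ contained in no member of $\textup{Bd}$) contains some element of $C$, since the finite face poset has minimal elements below it.

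For the inclusion $G' \subseteq C'$: if $\varepsilon \in G'$ then $d_\varepsilon = 1$ by (i); were $\varepsilon \subseteq \tau$ for some $\tau \in \textup{Bd}$, the monotonicity $\sigma \subseteq \tau \Rightarrow d_\sigma \le d_\tau$ (Theorem~\ref{Grabe injective}) would yield $1 = d_\varepsilon \le d_\tau = 0$, a contradiction; hence $\varepsilon$ lies in no member of $\textup{Bd}$, i.e.\ $\varepsilon \in C'$. For the reverse inclusion $C' \subseteq G'$: given $\varepsilon \in C'$, fact (ii) produces $\varepsilon' \in C$ with $\varepsilon' \subseteq \varepsilon$; by Theorem~\ref{Main} we have $G = C$, so $\varepsilon' \in G$ and thus $d_{\varepsilon'} = 1$, whence monotonicity gives $1 = d_{\varepsilon'} \le d_\varepsilon \le 1$, so $d_\varepsilon = 1$ and $\varepsilon \in G'$. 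Combining the two inclusions yields $C' = G'$.

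I expect the only delicate point to be fact (ii): one must check that the predicate defining $C$ in Lemma~\ref{Lemma 2} genuinely expresses ``minimal among the faces avoiding $\textup{Bd}$'' (a face is in $C$ iff it avoids every $\textup{Bd}$-face but each of its proper subfaces meets some $\textup{Bd}$-face), and that every face of $\Delta$ avoiding $\textup{Bd}$ therefore contains such a minimal one. This is a purely order-theoretic observation about the face poset and requires no new combinatorics; everything else is a direct application of the monotonicity of $d_\sigma$ together with the already proved equality $G = C$, so the corollary is in effect a repackaging of Theorem~\ref{Main}.
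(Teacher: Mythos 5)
Your proof is correct and follows essentially the same route as the paper's: both rest on Gr\"abe's monotonicity of $d_\sigma$, the identity $G=C$ from Theorem~\ref{Main}, and the order-theoretic observation that $G$ and $C$ are the inclusion-minimal elements of the upward-closed sets $G'$ and $C'$. Your handling of $G'\subseteq C'$ (a direct contradiction via $d_\varepsilon\le d_\tau=0$ rather than passing through $G=C$) is a minor, equally valid variation.
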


\begin{proof}
Notice that the elements of $G$ and $C$ are respectively the minimal elements of $G'$ and $C'$ with respect to the inclusion. Let $\varepsilon \subseteq \sigma \in \Delta:$ notice that if $\varepsilon \in G',$ then $\sigma \in G'$ and if $\varepsilon \in C',$ then $\sigma \in C'.$ Since $G=C,$ we have $G'=C'$.\\
\end{proof}

Since $\Bbbk[\Delta]$ is Cohen-Macaulay and since the Hilbert series of $S/I_2$ is well known (see \cite[Proposition 4.1.9(c)]{BruCoRaVa}), one can calculate the Hilbert series of $\omega_{\Bbbk[\Delta]}$ using \cite[Corollary 4.4.6]{BruHe93}. Anyway, in the next remark we calculate the Hilbert function of $\omega_{\Bbbk[\Delta]}$ using the combinatorial description of $\Delta.$

\begin{remark}
Using the fact that $G'=C'$, we can count the elements of $G'$. Let $G'_i \coloneqq \{\sigma \in G' : |\sigma|=i\}.$ Then, for $i \in \ZZ,$
$$|G'_i| = \sum_c \sum_r \binom{m}{r}\binom{m-r}{c}\frac{(i-1)!}{(r+c+n-m-i-1)!(m+i-n-c)!(i-r)!},$$
with $c \in \{0,\dots,m+i-n\}$ and $r \in \{m+i+1-n-c,\dots,\min(m-c,i)\}.$\\
Considering $\omega_{\Bbbk[\Delta]}$ as a $\NN$-graded module, we want to count the dimension of its $i$-th graded component. Notice that the faces in $G'_i$ gives arise to the square-free monomials of $\omega_{\Bbbk[\Delta]},$ but there are also other monomials $x_U$ which come from non-square-free vectors $U \in \NN^V$, with $|U|=i$ and $s(U) \in \Delta$. We have, for $i \in \ZZ$,
$$\dm_\Bbbk((\omega_{\Bbbk[\Delta]})_i) = \sum_{k=n-m}^{\min(i,n-1)}k^{i-k}|G'_k|.$$
\end{remark}

Here it is an example.

\begin{example}[$m=3, n=4$]\label{Delta 3,4}
In this case,
\begin{align*}
\Delta = \langle &\{(3,1),(3,2),(3,4)\}, \{(2,1),(3,1),(3,4)\}, \{(2,1),(2,4),(3,4)\},\\
&\{(2,1),(2,3),(2,4)\}, \{(1,2),(3,2),(3,4)\}, \{(1,2),(1,4),(3,4)\},\\ &\{(1,4),(2,4),(3,4)\}, \{(1,3),(2,3),(2,4)\}, \{(1,3),(1,4),(2,4)\},\\ &\{(1,2),(1,3),(1,4)\} \rangle.
\end{align*}

\begin{figure}[h]
\begin{center}
\begin{tikzpicture}[scale=1.5]
\node [black,left,red] at (0,0) {$(2,1)$};
\node [black,below left,red] at (1,-1.73) {$(3,1)$};
\node [black,below right,red] at (3,-1.73) {$(3,2)$};
\node [black,right,red] at (4,0) {$(1,2)$};
\node [black,above right,red] at (3,1.73) {$(1,3)$};
\node [black,above left,red] at (1,1.73) {$(2,3)$};
\draw [fill=blue!20] (0,0)--(1,-1.73)--(3,-1.73)--(4,0)--(3,1.73)--(1,1.73)--cycle;
\node [black,above left,red] at (3,.73) {$(1,4)$};
\node [black,left,red] at (1.5,.8) {$(2,4)$};
\node [black,below,red] at (2,-.8) {$(3,4)$};
\draw (4,0)--(2,-.5)--(3,-1.73);
\draw (1,-1.73)--(2,-.5)--(0,0);
\draw (0,0)--(1.5,.7)--(1,1.73);
\draw (1.5,.7)--(3,1.73)--(3,.73)--(4,0);
\draw (2,-.5)--(1.5,.7)--(3,.73)--(2,-.5);
\draw [fill=black] (0,0) circle (2pt);
\draw [fill=black] (1,-1.73) circle (2pt);
\draw [fill=black] (3,-1.73) circle (2pt);
\draw [fill=black] (4,0) circle (2pt);
\draw [fill=black] (3,1.73) circle (2pt);
\draw [fill=black] (1,1.73) circle (2pt);
\draw [fill=black] (1.5,.7) circle (2pt);
\draw [fill=black] (2,-.5) circle (2pt);
\draw [fill=black] (3,.73) circle (2pt);
\end{tikzpicture}
\caption{Case $m=3,$ $n=4;$ a geometric representation of $\Delta.$}
\end{center}
\end{figure}
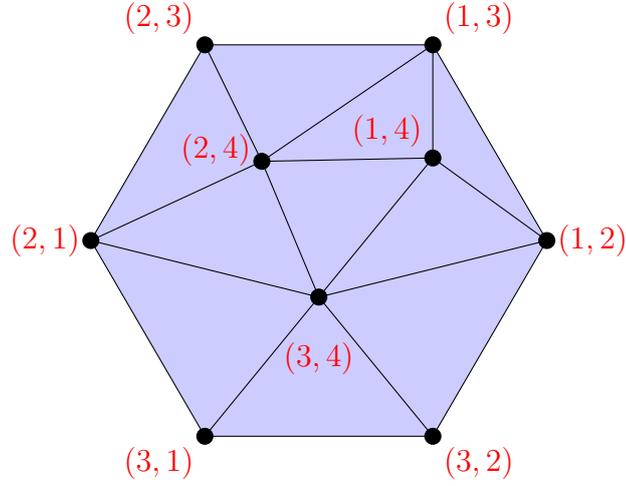

The faces correspondig to the minimal generators of $\omega_{\Bbbk[\Delta]}$ are precisely the element of $C=\{(1,4),(2,4),(3,4)\}$ and so the square-free generators are the faces that contains at least one element of $C,$ i.e. the elements of $C'.$ Let us apply the formulas of the previous remark.
\begin{itemize}
\item We know that $C'_i=\emptyset$ if $i<1$ and that $|C'_1|=|G'_1|=|C|=3.$
\item $|C'_2| = \sum_{c=0}^{1} \sum_{r=2-c}^{\min(3-c,2)} \binom{3}{r} \binom{3-r}{c} \frac{1!}{(r+c-2)!(1-c)!(2-r)!} = \dots = 12.$
\item For $C'_3$ we do not need the formula, since we know that all the facets are in $C',$ so $|C'_3|=|\FF(\Delta)|=10.$
\end{itemize}

\begin{figure}[h]
\begin{center}
\begin{tikzpicture}[scale=.8]
\node [black,left,red] at (0,0) {};
\node [black,below left,red] at (1,-1.73) {};
\node [black,below right,red] at (3,-1.73) {};
\node [black,right,red] at (4,0) {};
\node [black,above right,red] at (3,1.73) {};
\node [black,above left,red] at (1,1.73) {};
\draw [fill=blue!20] (0,0)--(1,-1.73)--(3,-1.73)--(4,0)--(3,1.73)--(1,1.73)--cycle;
\node [black,above left,red] at (3,.73) {};
\node [black,left,red] at (1.5,.8) {};
\node [black,below,red] at (2,-.8) {};
\draw (4,0)--(2,-.5)--(3,-1.73);
\draw (1,-1.73)--(2,-.5)--(0,0);
\draw (0,0)--(1.5,.7)--(1,1.73);
\draw (1.5,.7)--(3,1.73)--(3,.73)--(4,0);
\draw (2,-.5)--(1.5,.7)--(3,.73)--(2,-.5);
\draw [fill=black] (0,0) circle (2pt);
\draw [fill=black] (1,-1.73) circle (2pt);
\draw [fill=black] (3,-1.73) circle (2pt);
\draw [fill=black] (4,0) circle (2pt);
\draw [fill=black] (3,1.73) circle (2pt);
\draw [fill=black] (1,1.73) circle (2pt);
\draw [fill=red] (1.5,.7) circle (2pt);
\draw [fill=red] (2,-.5) circle (2pt);
\draw [fill=red] (3,.73) circle (2pt);
\node [black,below,red] at (2,-2) {$C'_1$};

\node [black,left,red] at (5,0) {};
\node [black,below left,red] at (6,-1.73) {};
\node [black,below right,red] at (8,-1.73) {};
\node [black,right,red] at (9,0) {};
\node [black,above right,red] at (8,1.73) {};
\node [black,above left,red] at (6,1.73) {};
\draw [fill=blue!20] (5,0)--(6,-1.73)--(8,-1.73)--(9,0)--(8,1.73)--(6,1.73)--cycle;
\node [black,above left,red] at (8,.73) {};
\node [black,left,red] at (6.5,.8) {};
\node [black,below,red] at (7,-.8) {};
\draw [red] (9,0)--(7,-.5)--(8,-1.73);
\draw [red] (6,-1.73)--(7,-.5)--(5,0);
\draw [red] (5,0)--(6.5,.7)--(6,1.73);
\draw [red] (6.5,.7)--(8,1.73)--(8,.73)--(9,0);
\draw [red] (7,-.5)--(6.5,.7)--(8,.73)--(7,-.5);
\draw [fill=black] (5,0) circle (2pt);
\draw [fill=black] (6,-1.73) circle (2pt);
\draw [fill=black] (8,-1.73) circle (2pt);
\draw [fill=black] (9,0) circle (2pt);
\draw [fill=black] (8,1.73) circle (2pt);
\draw [fill=black] (6,1.73) circle (2pt);
\draw [fill=black] (6.5,.7) circle (2pt);
\draw [fill=black] (7,-.5) circle (2pt);
\draw [fill=black] (8,.73) circle (2pt);
\node [black,below,red] at (7,-2) {$C'_2$};

\node [black,left,red] at (10,0) {};
\node [black,below left,red] at (11,-1.73) {};
\node [black,below right,red] at (13,-1.73) {};
\node [black,right,red] at (14,0) {};
\node [black,above right,red] at (13,1.73) {};
\node [black,above left,red] at (11,1.73) {};
\draw [fill=red!50] (10,0)--(11,-1.73)--(13,-1.73)--(14,0)--(13,1.73)--(11,1.73)--cycle;
\node [black,above left,red] at (13,.73) {};
\node [black,left,red] at (11.5,.8) {};
\node [black,below,red] at (12,-.8) {};
\draw (14,0)--(12,-.5)--(13,-1.73);
\draw (11,-1.73)--(12,-.5)--(10,0);
\draw (10,0)--(11.5,.7)--(11,1.73);
\draw (11.5,.7)--(13,1.73)--(13,.73)--(14,0);
\draw (12,-.5)--(11.5,.7)--(13,.73)--(12,-.5);
\draw [fill=black] (10,0) circle (2pt);
\draw [fill=black] (11,-1.73) circle (2pt);
\draw [fill=black] (13,-1.73) circle (2pt);
\draw [fill=black] (14,0) circle (2pt);
\draw [fill=black] (13,1.73) circle (2pt);
\draw [fill=black] (11,1.73) circle (2pt);
\draw [fill=black] (11.5,.7) circle (2pt);
\draw [fill=black] (12,-.5) circle (2pt);
\draw [fill=black] (13,.73) circle (2pt);
\node [black,below,red] at (12,-2) {$C'_3$};
\end{tikzpicture}
\caption{Case $m=3,$ $n=4;$ in red the elements of $C'_i.$}
\end{center}
\end{figure}
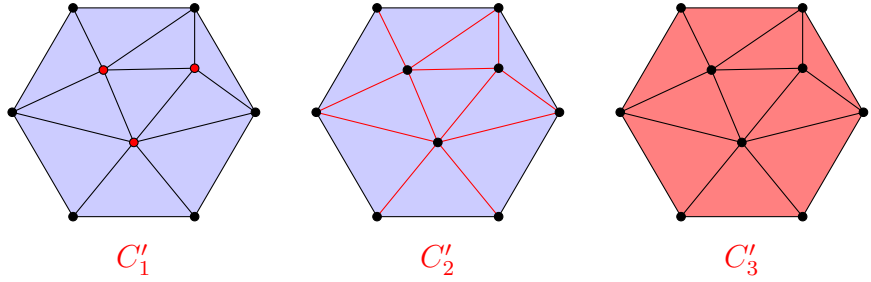

Now we calculate the $\dm_\Bbbk((\omega_{\Bbbk[\Delta]})_i)$ for some $i.$
\begin{itemize}
\item For $i<1,$ $\dm_\Bbbk((\omega_{\Bbbk[\Delta]})_i)=0.$
\item $\dm_\Bbbk((\omega_{\Bbbk[\Delta]})_1)=3,$ since in $(\omega_{\Bbbk[\Delta]})_1$ there are only square-free generators.
\item $\dm_\Bbbk((\omega_{\Bbbk[\Delta]})_2)= \sum_{k=1}^{\min(2,3)}k^{2-k}|C'_k| = |C'_1| + |C'_2| = 15.$
\item $\dm_\Bbbk((\omega_{\Bbbk[\Delta]})_3)= \sum_{k=1}^{\min(3,3)}k^{3-k}|C'_k| = |C'_1| + 2|C'_2| + |C'_3| = 37.$
\item $\dm_\Bbbk((\omega_{\Bbbk[\Delta]})_4)= \sum_{k=1}^{\min(4,3)}k^{4-k}|C'_k| = |C'_1| + 4|C'_2| + 3|C'_3| = 81.$
\end{itemize}
\end{example}

\begin{fact}\label{Betti useful}
Before the last remarks of this section, let us recall some general well-known properties about Betti tables. Let $D$ be an homogeneous ideal in a polynomial ring $T=\Bbbk[x_1,\dots,x_N]$ positively graded and fix $<$ a monomial order on $T$.
\begin{enumerate}[(a)]
\item For all $i,j$ we have $\beta_{i,j}(T/D) \leq \beta_{i,j}(T/\textup{in}_<(D))$ (see \cite[Corollary 1.6.9(a)]{BruCoRaVa}).
\item If $M$ is a finite graded $T$-module, then
$$\textup{HS}_{M}(t) = \frac{\sum_{i,j \in \ZZ}(-1)^i \beta_{i,j}(M) t^j}{(1-t)^n}.$$
In particular, since $\textup{HS}_{T/D}(t) = \textup{HS}_{T/\textup{in}_<(D)}(t)$ (see \cite[Proposition 1.6.2.(c)]{BruCoRaVa}), the alternating sum on a fixed anti-diagonal of the Betti table has the same value for $T/D$ and $T/\textup{in}_<(D)$.
\item For all $k>0$ we have $\beta_{k,k}(T/D)=\beta_{0,k}(T/D)=0.$
\item For $i \in \{0,\dots,\textup{pdim}_T(T/D)\}$ consider $t_i(T/D) = \max\{j : \beta_{i,j}(T/D) \neq 0\}$. Then, for all $i \leq N - \textup{dim}(T/D)= \textup{ht}(D)$, it holds $t_i(T/D)>t_{i-1}(T/D)$.
\end{enumerate}
\end{fact}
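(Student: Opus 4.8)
The plan is to treat the four items separately, reducing (a) and (b) to the quoted references and supplying short self-contained arguments for (c) and (d). For (a) there is nothing to do beyond citing \cite[Corollary 1.6.9(a)]{BruCoRaVa}; the underlying reason is that $\textup{in}_<(D)$ is the special fibre of a flat one-parameter degeneration whose general fibre is $D$, and graded Betti numbers are upper semicontinuous along such a family. For (b) I would start from a graded free resolution $0\to F_p\to\cdots\to F_0\to T/D\to 0$ with $F_i=\bigoplus_j T(-j)^{\beta_{i,j}(T/D)}$ and use additivity of Hilbert series along exact sequences, together with $\textup{HS}_{T(-j)}(t)=t^{j}/(1-t)^{N}$, to obtain the displayed formula; the ``in particular'' part then follows from $\textup{HS}_{T/D}(t)=\textup{HS}_{T/\textup{in}_<(D)}(t)$ (the standard monomials form a $\Bbbk$-basis of both, by \cite[Proposition 1.6.2(c)]{BruCoRaVa}), since equating the numerators $\sum_{i,j}(-1)^i\beta_{i,j}(-)\,t^{j}$ and comparing the coefficients of $t^{j}$ gives $\sum_i(-1)^i\beta_{i,j}(T/D)=\sum_i(-1)^i\beta_{i,j}(T/\textup{in}_<(D))$ for every $j$, which is exactly the asserted equality of alternating sums along the anti-diagonals.

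For (c) I would argue from minimality of the graded free resolution of $T/D$: since $T/D$ is cyclic with generator in degree $0$ one has $F_0=T$, so $\beta_{0,k}(T/D)=0$ for $k>0$; and since the differentials of a minimal resolution have entries in $\mathfrak m$, the minimal generating degree strictly increases along $F_\bullet$, so $F_i$ is generated in degrees $\ge i+1$ for $i\ge 1$ as soon as $D$ contains no linear form (which is the case in every application in this paper, $D$ being generated by quadrics), and hence $\beta_{k,k}(T/D)=0$ for $k>0$.

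The only item that requires real work is (d), and that is where I expect the main obstacle. The idea is to exploit that $T$ is Cohen--Macaulay, so $\textup{grade}(D)=\textup{ht}(D)=:c$ and therefore $\textup{Ext}^{j}_T(T/D,T)=0$ for $j<c$, while $\textup{pdim}_T(T/D)=N-\textup{depth}(T/D)\ge N-\textup{dim}(T/D)=c$ ensures $\beta_i(T/D)\ne 0$ and $t_i(T/D)$ is defined for $0\le i\le c$. Dualizing a minimal free resolution $F_\bullet$ of $T/D$ gives a complex $G^\bullet=\textup{Hom}_T(F_\bullet,T)$ with $G^{i}=\bigoplus_q T(q)^{\beta_{i,q}(T/D)}$, still with differentials of positive degree, and with $H^{j}(G^\bullet)=\textup{Ext}^{j}_T(T/D,T)=0$ for $j<c$. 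Assuming for contradiction that $t_i(T/D)\le t_{i-1}(T/D)=:d$ for some $1\le i\le c$, one gets $\beta_{i,q}(T/D)=0$ for $q>d$, hence $(\mathfrak m G^{i})_{-d}=0$, so the degree $-d$ component of $G^{i-1}\to G^{i}$ vanishes; likewise $\beta_{i-1,q}(T/D)=0$ for $q>d$, so $(\mathfrak m G^{i-1})_{-d}=0$ and the degree $-d$ component of $G^{i-2}\to G^{i-1}$ has image $0$. Exactness of $G^\bullet$ in cohomological degree $i-1$ (valid since $i-1<c$) then forces $(G^{i-1})_{-d}=0$, contradicting $\beta_{i-1,d}(T/D)\ne 0$. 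The delicate point is precisely this: extracting a \emph{strict} inequality genuinely uses the hypothesis $i\le\textup{ht}(D)$ through the vanishing of the lower $\textup{Ext}$ modules, everything else being degree bookkeeping; alternatively, (d) may simply be invoked as a classical fact about Betti tables.
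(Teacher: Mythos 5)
Your proposal is correct on all four points, and it actually goes beyond the paper, which records these statements as a ``Fact'' with bare citations for (a) and (b) and no argument at all for (c) and (d); so the comparison is really between your self-contained arguments and the paper's silence. Your proof of (b) is the standard additivity-of-Hilbert-series computation along a graded free resolution (and note the paper's displayed denominator $(1-t)^n$ should read $(1-t)^N$, as you wrote). For (c) you correctly observe that $\beta_{k,k}(T/D)=0$ for $k>0$ is \emph{false} for arbitrary homogeneous $D$ (e.g.\ $D=(x_1)$ has $\beta_{1,1}=1$) and needs $D\subseteq\mathfrak{m}^2$; this holds in every application here, since $D$ is always $I_2$ or an initial ideal of it, generated in degree $2$, but it is an implicit hypothesis the paper leaves unstated, and catching it is a genuine improvement. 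Your proof of (d) --- dualize the minimal free resolution, use $\textup{Ext}^j_T(T/D,T)=0$ for $j<\textup{grade}(D)=\textup{ht}(D)$ (equality because $T$ is Cohen--Macaulay) together with the fact that the dualized differentials still have entries in $\mathfrak{m}$, and extract a contradiction in internal degree $-t_{i-1}$ --- is the standard argument for strict growth of the maximal shifts up to the codimension, and the degree bookkeeping checks out: $(\mathfrak{m}G^{i})_{-d}=0$, $(\mathfrak{m}G^{i-1})_{-d}=0$, and $(G^{i-1})_{-d}=\Bbbk^{\beta_{i-1,d}}\neq 0$, so exactness at $G^{i-1}$ fails. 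The only step worth stating explicitly is that $t_i$ is defined for all $i\le\textup{ht}(D)$ because a minimal free resolution has no gaps and $\textup{pdim}_T(T/D)\ge N-\dim(T/D)$ by Auslander--Buchsbaum, which you do note.
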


\begin{remark}\label{other orders}
The levelness of $\Bbbk[\Delta]$ depends strongly on the choice of the order. We are going to see, more in general, how the Betti table of $S/\textup{in}(I_2)$ changes as we change order, in particular we analyze the case $m=4$, $n=5$. We used the software Macaulay2 (\cite{M2}) to calculate Betti numbers in the case $\Bbbk=\QQ$.\\

\begin{figure}[h]
	\begin{tabular}{|l|ccccccccccccc|}
		\hline
		& 0 & 1 & 2 & 3 & 4 & 5 & 6 & 7 & 8 & 9 & 10 & 11 & 12 \\ \hline
		0 & 1 & 0 & 0 & 0 & 0 & 0 & 0 & 0 & 0 & 0 & 0 & 0 & 0 \\
		1 & 0 & 60 & 360 & 1005 & 1452 & 1050 & 300 & 0 & 0 & 0 & 0 & 0 & 0 \\
		2 & 0 & 0 & 0 & 0 & 500 & 2280 & 4455 & 4840 & 3168 & 1200 & 200 & 0 & 0 \\
		3 & 0 & 0 & 0 & 0 & 0 & 0 & 0 & 0 & 0 & 35 & 60 & 30 & 4 \\ \hline
	\end{tabular}
	\caption{Case $m=4,$ $n=5;$ the Betti table of $S/I_2$.}
	\label{Betti minori 45}
\end{figure}

\begin{figure}[h]
	\begin{tabular}{|l|ccccccccccccc|}
		\hline
		& 0 & 1 & 2 & 3 & 4 & 5 & 6 & 7 & 8 & 9 & 10 & 11 & 12 \\ \hline
		0 & 1 & 0 & 0 & 0 & 0 & 0 & 0 & 0 & 0 & 0 & 0 & 0 & 0 \\
		1 & 0 & 60 & 360 & 1025 & 1692 & 1722 & 1103 & 446 & 112 & 16 & 1 & 0 & 0 \\
		2 & 0 & 0 & 20 & 240 & 1173 & 3092 & 4939 & 5049 & 3349 & 1395 & 325 & 30 & 0 \\
		3 & 0 & 0 & 0 & 1 & 9 & 38 & 97 & 165 & 194 & 160 & 90 & 30 & 4 \\ \hline
	\end{tabular}
	\caption{Case $m=4,$ $n=5;$ the Betti table of $S/I$.}
	\label{Betti righe 45}
\end{figure}

If we choose the most natural degrevlex order induced by the following order on the variables
$$x_{1,1}>x_{1,2}> \cdots x_{1,n} > x_{2,1} > x_{2,2} > \cdots x_{2,n} > \cdots > x_{m,n}$$
and consider $J$ the initial ideal of $I_2$ with respect to that order, $S/J$ turns out to be not level if $m>2$. For the proof of this result one can use \cite[Theorem 1.2]{Yu}: from this result of Yu, it follows that $\beta_{h,h+1}(S/I_2) \geq 2$ (with $h=\textup{ht}(I_2)$). Notice that, since all the polynomial of the universal Gr\"obner basis for $I_2$ described in \cite[Section 5.3]{BruCoRaVa} are binomials with square-free terms, any initial ideal of $I_2$ is square-free; this implies (see \cite[Theorem 2.4.3(c)]{BruCoRaVa}) that $\textup{reg}(S/I_2)= \textup{reg}(S/\textup{in}(I_2))$ for any initial ideal. Therefore $\textup{reg}(S/J)= \textup{reg}(S/I_2)=m-1$ (see \cite[Remark 4.4.13]{BruCoRaVa}) and, since $\textup{pdim}(S/J)=\textup{pdim}(S/I_2)=h$ ($J$ is square-free), the Fact \ref{Betti useful}(b) implies that $\beta_{h,h+m-1}(S/J)=\beta_{h,h+m-1}(S/I_2) \neq 0.$ Thus, $S/J$ is not level.

\begin{figure}[h]
	\begin{tabular}{|l|ccccccccccccc|}
		\hline
		& 0 & 1 & 2 & 3 & 4 & 5 & 6 & 7 & 8 & 9 & 10 & 11 & 12 \\ \hline
		0 & 1 & 0 & 0 & 0 & 0 & 0 & 0 & 0 & 0 & 0 & 0 & 0 & 0 \\
		1 & 0 & 60 & 360 & 1080 & 2058 & 2744 & 2694 & 1997 & 1120 & 464 & 134 & 24 & 2 \\
		2 & 0 & 0 & 75 & 606 & 2201 & 4734 & 6680 & 6464 & 4346 & 2006 & 609 & 110 & 9 \\
		3 & 0 & 0 & 0 & 7 & 60 & 228 & 504 & 714 & 672 & 420 & 168 & 39 & 4 \\ \hline
	\end{tabular}
	\caption{Case $m=4,$ $n=5;$ the Betti table of $S/J$.}
	\label{Betti classico 45}
\end{figure}

Anyway, our monomial order is not the only one that gives arise to a level initial ideal; one can prove, using the same techniques that we use in this paper, that $S/\textup{in}_{<'}(I_2)$ is level, with $<'$ the degrevlex order induced by the following order on the variables: $x_{i,j} <' x_{k,l}$ if one among these conditions holds\\

\vspace{2cm}

\begin{enumerate}[(i)]
\item $i-j=k-l$ and $i>k,$ \\
i.e. $x_{i,j}$ and $x_{k,l}$ are in the same diagonal of $X$ and $x_{k,l}$ is higher than $x_{i,j}$;
\item $i-j<k-l,$ $i \neq j$ and $k \neq l,$ \\
i.e. the diagonal of $x_{k,l}$ is under the diagonal of $x_{i,j}$ and both are not the main diagonal; 
\item $k \neq l$ and $i=j,$ \\
i.e. $x_{i,j}$ is in the main diagonal and $x_{k,l}$ is not.
\end{enumerate}
Basically, to go from the highest to the lowest variable we scroll the diagonals from the left to the right, but we leave in the last place the main diagonal. The simplicial complex $\Delta'$ associated with $\textup{in}_{<'}(I_2)$ is very similar to $\Delta$: the difference is the fact that the facets of $\Delta'$ are paths from the bottom-left vertex to the top-right one, since in a minor without cone points the leading term is always the diagonal.

\begin{figure}[h]
	\begin{tabular}{|l|ccccccccccccc|}
		\hline
		& 0 & 1 & 2 & 3 & 4 & 5 & 6 & 7 & 8 & 9 & 10 & 11 & 12 \\ \hline
		0 & 1 & 0 & 0 & 0 & 0 & 0 & 0 & 0 & 0 & 0 & 0 & 0 & 0 \\
		1 & 0 & 60 & 360 & 1025 & 1682 & 1686 & 1064 & 442 & 127 & 23 & 2 & 0 & 0 \\
		2 & 0 & 0 & 20 & 230 & 1137 & 3035 & 4935 & 5064 & 3356 & 1396 & 325 & 30 & 0 \\
		3 & 0 & 0 & 0 & 1 & 9 & 38 & 97 & 165 & 194 & 160 & 90 & 30 & 4 \\ \hline
	\end{tabular}
	\caption{Case $m=4,$ $n=5;$ the Betti table of $S/\textup{in}_{<'}(I_2)$.}
	\label{Betti diago 45}
\end{figure}

By computing many examples with Macaulay2 using other orders, we come up with the following question.\\

\textbf{Question.} If we choose a degrevlex order on $\Bbbk[X]$ such that $x_{i,j}>x_{k,k}$ for all $i,k \in [m],$ $j \in [n]$ and $i \neq j$, is the levelness of the initial ideal of $I_2$ guaranteed for all $m$ and all $n$?\\

Unfortunately, in this more general case we cannot use the same approach to prove the levelness, since the simplicial complex associated to the initial ideal is much more complicated: the facets do not form some sort of paths anymore.

\begin{figure}[h]
	\begin{tabular}{|l|ccccccccccccc|}
		\hline
		& 0 & 1 & 2 & 3 & 4 & 5 & 6 & 7 & 8 & 9 & 10 & 11 & 12 \\ \hline
		0 & 1 & 0 & 0 & 0 & 0 & 0 & 0 & 0 & 0 & 0 & 0 & 0 & 0 \\
		1 & 0 & 60 & 360 & 1033 & 1745 & 1869 & 1317 & 620 & 191 & 35 & 3 & 0 & 0 \\
		2 & 0 & 0 & 28 & 293 & 1319 & 3297 & 5076 & 5040 & 3239 & 1284 & 273 & 21 & 0 \\
		3 & 0 & 0 & 0 & 0 & 0 & 1 & 9 & 36 & 81 & 108 & 81 & 30 & 4 \\ \hline
	\end{tabular}
	\caption{Case $m=4,$ $n=5;$ the Betti table of $S/\textup{in}(I_2)$ w.r.t. the degrevlex order induced by $x_{3,2} > x_{2,1} > x_{2,5} > x_{4,1} > x_{1,5} > x_{3,1} > x_{2,4} > x_{3,5} > x_{2,3} > x_{1,4} > x_{4,5} > x_{1,3} > x_{4,3} > x_{4,2} > x_{3,4} > x_{1,2} > x_{1,1} > x_{2,2} > x_{3,3} > x_{4,4}$.}
	\label{Betti random A 45}
\end{figure}

\begin{figure}[h]
	\begin{tabular}{|l|ccccccccccccc|}
		\hline
		& 0 & 1 & 2 & 3 & 4 & 5 & 6 & 7 & 8 & 9 & 10 & 11 & 12 \\ \hline
		0 & 1 & 0 & 0 & 0 & 0 & 0 & 0 & 0 & 0 & 0 & 0 & 0 & 0 \\
		1 & 0 & 60 & 360 & 1023 & 1683 & 1716 & 1107 & 454 & 117 & 17 & 1 & 0 & 0 \\
		2 & 0 & 0 & 18 & 231 & 1166 & 3088 & 4917 & 4987 & 3254 & 1309 & 284 & 23 & 0 \\
		3 & 0 & 0 & 0 & 0 & 1 & 8 & 30 & 69 & 108 & 119 & 83 & 30 & 4 \\ \hline
	\end{tabular}
	\caption{Case $m=4,$ $n=5;$ the Betti table of $S/\textup{in}(I_2)$ w.r.t. the degrevlex order induced by $x_{2,5} > x_{1,2} > x_{2,4} > x_{4,3} > x_{4,2} > x_{2,3} > x_{1,5} > x_{4,1} > x_{3,2} > x_{2,1} > x_{1,4} > x_{3,1} > x_{4,5} > x_{3,5} > x_{1,3} > x_{3,4} > x_{1,1} > x_{2,2} > x_{3,3} > x_{4,4}$.}
	\label{Betti random B 45}
\end{figure}
\end{remark}

\vspace{2cm}

\begin{remark}\label{Equal Betti tables}
By comparing Figure \ref{Betti minori 45} with Figure \ref{Betti righe 45}, \ref{Betti classico 45}, \ref{Betti diago 45}, \ref{Betti random A 45} and \ref{Betti random B 45}, we can observe that for certain orders some Betti numbers of $S/I_2$ and $S/\textup{in}(I_2)$ coincide. Now we are going to see for which $m,n$ there exists a monomial order $<$ on $S$ such that the Betti tables of $S/I_2$ and $S/\textup{in}_<(I_2)$ are the same.\\

The only case in which the answer was not known was for $m=3$ and $n \geq 4$. In order to solve this case we used computation.\\

In general, Betti numbers $\beta_{i,j}(M)$ of an $S$-module $M$ depend on the characteristic of $\Bbbk$. In this regard, from the universal coefficient theorem and the Hochster's formula (\cite[Theorem 5.5.1]{BruHe93}) it follows that, if $M$ is a Stanley-Reisner ring, the Betti numbers cannot decrease if we pass from characteristic $0$ to characteristic $p$ (with $p$ any prime).\\
If $M=S/I_2$, we have that Betti numbers can depend on the field if $m \geq 5$ and otherwise they do not (see \cite[Lemma 2.2 and Theorem 2.6]{Ha}).\\

Furthermore, it is well known that, for all $n \geq m \geq 2$, $\beta_{2,4}(S/I_2)=0$ in any characteristic (see \cite{Sh}), while $\beta_{3,5}(S/I_2)=0$ in characteristic 0 (see \cite{La}). We have done some computations that, together with these facts, show that the Betti tables of $S/I_2$ and $S/\textup{in}(I_2)$ are never the same unless $m=2$ or $m=n=3$.\\
In the case $m=n$ note that, without doing any computation, whenever $\textup{in}(I_2)$ is a Gorenstein initial ideal of $I_2$, from \cite[Theorem 4]{ConKaVa16}, we have:

\begin{enumerate}
\item for $m=n \geq 4$, $\beta_{3,5}(S/\textup{in}(I_2))$ cannot be zero, so the Betti tables of $S/I_2$ and $S/\textup{in}(I_2)$ cannot agree in characteristic $0$;
\item for $m=n \geq 6$, $\beta_{2,4}(S/\textup{in}(I_2))$ cannot be zero, so the Betti tables of $S/I_2$ and $S/\textup{in}(I_2)$ cannot agree in any characteristic.
\end{enumerate}

The following argument allows us to extend $(1)$ and $(2)$ to bigger $n$ and $m$ in the rectangular case $m<n$.\\

Consider $X'= \{x_{i,j}\}_{i \in [m']}^{j \in [n']}$, with $m' \geq m$ and $n' \geq n$, and let $S'= \Bbbk[X']$ and $I_2'=I_2(X')$. Given a monomial order on $S'$ we can consider its restriction on $S$ and notice that every monomial order on $S$ can be obtained in this way, thus when we compare $\textup{in}(I_2)$ and $\textup{in}(I_2')$ we are considering the "same" monomial order. Since one can check that $S/\textup{in}(I_2)$ is a direct summand of $S'/\textup{in}(I_2')$, then also $\textup{Tor}_i^S(S/\textup{in}(I_2),\Bbbk)_j$ is a direct summand of $\textup{Tor}_i^{S'}(S'/\textup{in}(I_2'),\Bbbk)_j$ for all $i$ and $j$. Therefore, if $\beta_{i,j}(S/\textup{in}(I_2)) \neq 0,$ then $\beta_{i,j}(S'/\textup{in}(I_2')) \neq 0.$\\

It follows that the Betti tables of $S/I_2$ and $S/\textup{in}(I_2)$ cannot agree for $n \geq m \geq 4$ in characteristic $0$ and for $n \geq m \geq 6$ in any characteristic.\\

Now we analyze the two cases in which the Betti tables of $S/I_2$ and $S/\textup{in}(I_2)$ coincide for some (in the first case fo every) initial ideal.\\

For $n \geq m = 2$, the Betti tables of $S/I_2$ and $S/\textup{in}(I_2)$ are equal for every monomial order.\\
We have already observed that $\textup{reg}(S/I_2) = \textup{reg}(S/\textup{in}(I_2)) = m-1$, thus in this case the Betti tables have only $2$ rows. From (b) and (c) of Fact \ref{Betti useful} it follows that the Betti tables are equal.

\begin{figure}[h]
\begin{tabular}{|l|cccccccccc|}
	\hline
	& 0 & 1 & 2 & 3 & 4 & 5 & 6 & 7 & 8 & 9 \\ \hline
	0 & 1 & 0 & 0 & 0 & 0 & 0 & 0 & 0 & 0 & 0 \\
	1 & 0 & 45 & 240 & 630 & 1008 & 1050 & 720 & 315 & 80 & 9 \\ \hline
\end{tabular}
\caption{Case $m=2$, $n=10$; the Betti table of $S/I_2$.}
\label{Betti minori 210}
\end{figure}

For $m=n=3$, we show that the Betti tables are equal if and only if the initial ideal is level (and therefore Gorenstein, since $m=n$).\\
As we said before, we know that the regularity is $2$; thus, from (b) and (c) of Fact \ref{Betti useful}, the only numbers that could be positive for $\textup{in}(I_2)$ and $0$ for $I_2$ are $\beta_{1,3}$, $\beta_{2,4}$, $\beta_{3,5}$ and $\beta_{4,5}$. From (b) it follows that
$$\beta_{3,5}(S/\textup{in}(I_2))=0 \iff \beta_{4,5}(S/\textup{in}(I_2))=0$$
and from (d)
$$\beta_{3,5}(S/\textup{in}(I_2))=0 \implies \beta_{1,3}(S/\textup{in}(I_2))=\beta_{2,4}(S/\textup{in}(I_2))=0.$$
Summing up, we have obtained that $\textup{in}(I_2)$ is level (i.e. $\beta_{4,5}(S/\textup{in}(I_2))=0$) if and only if $\beta_{4,5}(S/\textup{in}(I_2))=\beta_{3,5}(S/\textup{in}(I_2))=\beta_{2,4}(S/\textup{in}(I_2))=\beta_{1,3}(S/\textup{in}(I_2))=0$ if and only if the Betti tables of $S/I_2$ and $S/\textup{in}(I_2)$ are equal. The last equivalence follows from (b), since in each anti-diagonal there is at most one non-zero number.

\begin{figure}[h]
\begin{tabular}{|l|ccccc|}
	\hline
	& 0 & 1 & 2 & 3 & 4 \\ \hline
	0 & 1 & 0 & 0 & 0 & 0 \\
	1 & 0 & 9 & 16 & 9 & 0 \\
	2 & 0 & 0 & 0 & 0 & 1 \\ \hline
\end{tabular}
\caption{Case $m=3,$ $n=3;$ the Betti table of $S/I_2$.}
\label{Betti minori 33}
\end{figure}

We want to show that in the remaining cases, i.e. for $m \geq 3$ and $n \geq 4$, there is no initial ideal with the same Betti tables as $I_2$. The aim of the discussion below is to fill the remaining gaps, for which we do not have a conceptual proof.\\

Using Macaulay2, with the help of the package "StatePolytope" (\cite{M2a}), we computed Betti numbers for all the possible $4488$ initial ideals in the case $m=3$ and $n=4$, with $\Bbbk=\QQ$. We found out that $\beta_{3,5}(S/\textup{in}(I_2)) \geq 3$ for all the initial ideals in characteristic $0$ (and therefore in every characteristic), on the other hand we have $\beta_{3,5}(S/I_2)=0$ for every $\Bbbk$.\\
This implies that, for $m \geq 3$ and $n \geq 4$, $\beta_{3,5}(S/\textup{in}(I_2)) \neq 0$ for all the initial ideals and, since when char$(\Bbbk)=0$ we have already noticed that $\beta_{3,5}(S/I_2)=0$ for all $m$ and $n$, the statement is true in characteristic $0$.\\
Moreover, we can conclude even in characteristic $p$ for $m=3$ and any $n$ since in this case, as we have already noticed, Betti numbers of $S/I_2$ do not depend on the characteristic.\\
Since $\beta_{3,5}(S/I_2)=0$ does not hold for all $m$ and $n$ in every characteristic, we cannot use the same argument in characteristic $p$, therefore we concentrate on $\beta_{2,4}$. We have already noticed that $\beta_{2,4}(S/I_2)=0$ for all $m$ and $n$ and for every characteristic, but for $m=3$ and $n=4$ there exist some initial ideal in$(I_2)$ such that $\beta_{2,4}(S/\textup{in}(I_2))=0$.\\
Thus, it is enough to prove that, for all the initial ideals, $\beta_{2,4}(S/\textup{in}(I_2)) \neq 0$ in the case $m=4$ and $n=4$. This is been done in \cite[Example 1.4]{CoHosTh} by computing $\beta_{2,4}(S/\textup{in}(I_2))$ for all the possible initial ideals. Thus, the statement is true also in characteristic $p$.

\begin{figure}[h]
	\begin{tabular}{|l|ccccccc|}
		\hline
		& 0 & 1 & 2 & 3 & 4 & 5 & 6 \\ \hline
		0 & 1 & 0 & 0 & 0 & 0 & 0 & 0 \\
		1 & 0 & 18 & 52 & 60 & 24 & 0 & 0 \\
		2 & 0 & 0 & 0 & 0 & 10 & 12 & 3 \\ \hline
	\end{tabular}
	\caption{Case $m=3,$ $n=4;$ the Betti table of $S/I_2$.}
	\label{Betti minori 34}
\end{figure}

\begin{figure}[h]
	\begin{tabular}{|l|cccccccccc|}
		\hline
		& 0 & 1 & 2 & 3 & 4 & 5 & 6 & 7 & 8 & 9 \\ \hline
		0 & 1 & 0 & 0 & 0 & 0 & 0 & 0 & 0 & 0 & 0 \\
		1 & 0 & 36 & 160 & 315 & 288 & 100 & 0 & 0 & 0 & 0 \\
		2 & 0 & 0 & 0 & 0 & 100 & 288 & 315 & 160 & 36 & 0 \\
		3 & 0 & 0 & 0 & 0 & 0 & 0 & 0 & 0 & 0 & 1 \\ \hline
	\end{tabular}
	\caption{Case $m=4,$ $n=4;$ the Betti table of $S/I_2$.}
	\label{Betti minori 44}
\end{figure}

\end{remark}

\section{Shellability} \label{shellablility}
The shellability of a simplicial complex is an important combinatorial property, which is stronger than the Cohen-Macaulayness and so it is often used in order to prove the latter. In this section we prove the shellability of $\Delta.$ In \cite[Section 4]{CoHosTh} they prove that the simplicial complex $\Delta'$ (the one we have introduced in the Remark \ref{other orders}) is shellable in the case $m=n$. One can apply the same argument, with proper modifications, to prove the shellability of $\Delta$ in the case $m=n.$ In this section we generalize their proof to the case $m<n.$

\begin{definition}
A pure $d$-dimensional simplicial complex $\Gamma$ is said to be \textbf{shellable} if there exists a total order on its facets $F_1,\dots,F_c$ (which is called a \textbf{shelling order}) such that, for all $i \in \{2,\dots,c\},$ $\langle F_i \rangle \cap \langle F_1,\dots,F_{i-1} \rangle$ is a pure ($d-1$)-dimensional simplicial complex. Moreover, the order is a \textbf{two-way shelling} if also $F_c,\dots,F_1$ is shelling.
\end{definition}

We have already noticed that if we fix $F$ a facet of $\Delta$ there exists a unique minimal submatrix $M^F$ that contains $F.$ Thus the set of the facets of $\Delta$ can be written as the following disjoint union
$$\FF(\Delta) = \displaystyle \bigsqcup_{\emptyset \subsetneq r \subseteq [m]} \{F : r^F=r\}.$$
Therefore, to provide a total order on $\FF(\Delta),$ we first order all the submatrices of the type $M^F$ for some $F \in \FF(\Delta)$ and then order all the facets in each submatrix. Notice that every submatrix of this type is uniquely determined by the set of its rows $r^F.$

\begin{definition}
Let $\emptyset \subsetneq r_1, r_2 \subseteq [m],$ $r_1 \neq r_2.$ Then we can write $r_1=\{a_1,\dots,a_c\}$ and $r_2=\{b_1,\dots,b_d\},$ with $a_1<\dots<a_c$ and $b_1<\dots<b_d.$ We say that
$$r_1 < r_2 \iff \begin{cases}
	a_i<b_i \text{ for the smallest $i$ such that $a_i \neq b_i$} \\
	\hspace{3cm} \text{or} \\
	c<d \text{ and $a_i=b_i$ for all $i \in [c]$}
\end{cases}.$$
Now let us consider $F,G \in \FF(\Delta)$ such that $r^F=r^G$ and $F \neq G.$ We say that $F<G$ if the first vertex in which they differ, starting from the top-left one, is one step to the right (respect to the previous one) in $F$ and one step lower in $G.$
Finally, let $F,G$ be two different facets.
$$F<G \iff \begin{cases}
	r^F < r^G \\
	\hspace{.5cm} \text{or} \\
	r^F = r^G \text{ and } F<G \text{ in the sense that we have just said}
\end{cases}.$$
\end{definition}

\begin{definition}
Let $F \in \FF(\Delta).$ We define the following three subsets of $F$ which form a partition of $F:$
\begin{align*}
F^+ \coloneqq & \{v \in F : v \text{ is right turning}\} \cup \\
& \{v \in F : v=(i,j) \text{ is horizontal and } \max(r^F) < j \leq m\} \cup \\
& \{v \in F : v=(i,j) \text{ is vertical and } i < \max(r^F)\}, \\
\\
F^- \coloneqq & \{v \in F : v \text{ is left turning}\} \cup \\
& \{v \in F : v=(i,j) \text{ is horizontal and } j < \max(r^F)\} \cup \\
& \{v \in F : v=(i,j) \text{ is vertical and } i = \max(r^F)\}, \\
\\
F^* \coloneqq & \{v \in F : v=(i,j) \text{ is horizontal and } j > m\}.
\end{align*}
\end{definition}

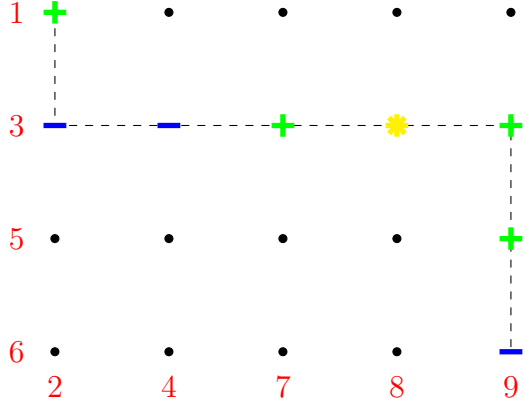
\begin{figure}[h]
\begin{center}
\begin{tikzpicture}[scale=1.5]
\node [black,above,text=red] at (0,-.5) {$2$};
\node [black,above,text=red] at (1,-.5) {$4$};
\node [black,above,text=red] at (2,-.5) {$7$};
\node [black,above,text=red] at (3,-.5) {$8$};
\node [black,above,text=red] at (4,-.5) {$9$};
\node [black,right,text=red] at (-.5,0) {$6$};
\node [black,right,text=red] at (-.5,1) {$5$};
\node [black,right,text=red] at (-.5,2) {$3$};
\node [black,right,text=red] at (-.5,3) {$1$};
\draw [dashed] (0,3)--(0,2)--(1,2)--(2,2)--(3,2)--(4,2)--(4,1)--(4,0);
\draw (0,3) node[cross=5pt,rotate=45,line width=2pt,green] {};
\draw [line width=2pt,blue] (-.1,2)--(.1,2);
\draw [line width=2pt,blue] (.9,2)--(1.1,2);
\draw (2,2) node[cross=5pt,rotate=45,line width=2pt,green] {};
\draw (3,2) node[cross=5pt,rotate=45,line width=2pt,yellow] {};
\draw (3,2) node[cross=5pt,line width=2pt,yellow] {};
\draw (4,2) node[cross=5pt,rotate=45,line width=2pt,green] {};
\draw (4,1) node[cross=5pt,rotate=45,line width=2pt,green] {};
\draw [line width=2pt,blue] (3.9,0)--(4.1,0);
\draw [fill=black] (0,0) circle (1pt);
\draw [fill=black] (1,0) circle (1pt);
\draw [fill=black] (2,0) circle (1pt);
\draw [fill=black] (3,0) circle (1pt);
\draw [fill=black] (0,1) circle (1pt);
\draw [fill=black] (1,1) circle (1pt);
\draw [fill=black] (2,1) circle (1pt);
\draw [fill=black] (3,1) circle (1pt);
\draw [fill=black] (1,3) circle (1pt);
\draw [fill=black] (2,3) circle (1pt);
\draw [fill=black] (3,3) circle (1pt);
\draw [fill=black] (4,3) circle (1pt);
\end{tikzpicture}
\caption{Case $m=7,$ $n=9;$ $r^F=\{1,3,5,6\},$\\ $F^+=\{(1,2),(3,7),(3,9),(5,9)\},$ $F^-=\{(3,2),(3,4),(6,9)\},$\\ $F^*=\{(3,8)\}.$}
\end{center}
\end{figure}

\begin{theorem}
For every facet $F \in \FF(\Delta)$ we have:
\begin{enumerate}[(i)]
\item $\langle F \rangle \cap \langle G \in \FF(\Delta) : G < F \rangle = \langle F \smallsetminus \{v\} : v \in F^- \rangle;$
\item $\langle F \rangle \cap \langle G \in \FF(\Delta) : G > F \rangle = \langle F \smallsetminus \{v\} : v \in F^+ \rangle.$
\end{enumerate}
Notice that in $(i)$ we have $\emptyset = \emptyset$ if $F$ is the minimal facet and in $(ii)$ we have $\emptyset = \emptyset$ if $F$ is the maximal facet.\\
In particular the order $<$ on the facets is a two-way shelling of $\Delta.$
\end{theorem}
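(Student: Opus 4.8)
The plan is to recognise $F^-$ and $F^+$ as the restriction sets of the two shellings. Once $(i)$ is established, the subcomplex $\langle F\smallsetminus\{v\}:v\in F^-\rangle$ is pure of dimension $n-3$ whenever $F^-\neq\emptyset$, so the increasing order is a shelling; by $(ii)$ and the same reasoning with $F^+$ the decreasing order is a shelling too, and the two together give the two-way shelling. Statement $(ii)$ will follow from $(i)$ by exchanging the roles of ``down'' and ``right'' and of $\max$ and $\min$, so below I only discuss $(i)$.

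First I would make the flip construction from the proof of Proposition~\ref{Delta quasimanifold} explicit, keeping track of the submatrices. For $F\in\FF(\Delta)$ and $v\in F$: if $v\in F^*$ (a horizontal vertex of $F$ in a column $>m$) then, as observed in Remark~\ref{Boundary faces}, $F\smallsetminus\{v\}$ lies only in $F$; otherwise $F\smallsetminus\{v\}$ lies in exactly one other facet $G_v$, obtained by flipping $v$ inside $M^F$ if $v$ is turning, by passing to the submatrix with rows $r^F\cup\{j\}$ and columns $c^F\smallsetminus\{j\}$ if $v=(i,j)$ is horizontal, and to the submatrix with rows $r^F\smallsetminus\{i\}$ and columns $c^F\cup\{i\}$ if $v=(i,j)$ is vertical. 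Next I would compare $G_v$ with $F$ under $<$: flipping a left (resp. right) turning vertex turns ``down then right'' into ``right then down'' (resp. conversely), so $G_v<F$ (resp. $G_v>F$); for a horizontal $v=(i,j)$ one has $r^F\cup\{j\}<r^F$ when $j<\max(r^F)$ and $r^F\cup\{j\}>r^F$ when $\max(r^F)<j\le m$, by the definition of $<$ on subsets of $[m]$; for a vertical $v=(i,j)$ one has $r^F\smallsetminus\{i\}<r^F$ when $i=\max(r^F)$ and $r^F\smallsetminus\{i\}>r^F$ when $i<\max(r^F)$. In every case $G_v<F\iff v\in F^-$ and $G_v>F\iff v\in F^+$; in particular $\langle F\smallsetminus\{v\}:v\in F^-\rangle\subseteq\langle F\rangle\cap\langle G\in\FF(\Delta):G<F\rangle$.

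The heart of the argument is the reverse inclusion: given a facet $G<F$ with $G\neq F$, I must exhibit a vertex $v\in F^-$ not lying in $G$, so that $F\cap G\subseteq F\smallsetminus\{v\}$. If $r^G=r^F$, the paths $F$ and $G$ live in the same submatrix and I would look at the first vertex $w$ where they diverge: there $F$ makes a downward step and $G$ a rightward step (this is precisely what $G<F$ means within a submatrix), and the maximal descending run of $F$ starting at $w$ must end at a left turning vertex (it cannot reach the bottom-right corner, since $w$ is not in the last column); that vertex lies in $F^-$, and it does not belong to $G$ because $G$ agrees with $F$ only up to $w$ and afterwards stays strictly to the right of the column of $w$. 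If $r^G\neq r^F$, then $r^G<r^F$; here I would distinguish whether $\max(r^F)\in r^G$: if not, the leftmost vertex of $F$ in its bottom row $\max(r^F)$ — which one checks always belongs to $F^-$ — cannot lie in $G$; if $\max(r^F)\in r^G$, I would descend, just as before, starting from the first row in which the submatrices $M^F$ and $M^G$ begin to differ, again reaching a vertex of $F^-$ outside $G$. This case analysis is patterned on \cite[Section~4]{CoHosTh}.

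Assembling the two inclusions yields $(i)$, and the mirror argument yields $(ii)$. Since a non-minimal $F$ admits some $G<F$, the previous paragraph produces a $v\in F^-$, so $F^-\neq\emptyset$ and $\langle F\rangle\cap\langle G\in\FF(\Delta):G<F\rangle$ is pure of dimension $n-3$; symmetrically $F^+\neq\emptyset$ when $F$ is non-maximal. Therefore $<$ is a two-way shelling of $\Delta$. I expect the hard part to be the last case of the core step — matching the order on the row sets with the path geometry inside two \emph{different} submatrices $M^F$ and $M^G$ — where a somewhat delicate case analysis seems unavoidable.
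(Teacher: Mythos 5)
Your overall strategy is the paper's: identify the three types of codimension-one faces of a facet with the partition $F = F^+ \sqcup F^- \sqcup F^*$ (your second paragraph is exactly the paper's Step~1, and your comparison of $r^F\cup\{j\}$, $r^F\smallsetminus\{i\}$ with $r^F$ is correct), and then prove the reverse inclusions by exhibiting, for each $G<F$ (resp.\ $G>F$), a vertex of $F^-$ (resp.\ $F^+$) missed by $G$. Your treatment of the case $r^G=r^F$ and of the sub-case $\max(r^F)\notin r^G$ is sound. But there is a genuine gap: the claim that $(ii)$ follows from $(i)$ ``by exchanging the roles of down and right and of $\max$ and $\min$.'' In the rectangular case $m<n$ there is no such symmetry. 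The partition of $F$ is manifestly asymmetric: $F^*$ consists only of horizontal vertices in columns $>m$; $F^-$ contains at most one vertical vertex (the one in row $\max(r^F)$, if any) while $F^+$ may contain many; and the order on facets is built from subsets of $[m]$ (rows), not of $[n]$ (columns). Concretely, the paper's proof of the $F^+$ inclusion (its Step~3) is substantially longer than the $F^-$ one and contains cases with no mirror counterpart: when the distinguishing row $a_c$ of $M^F$ carries no right turning, vertical or suitable horizontal vertex, one must abandon the rows entirely and hunt for a vertex of $F^+$ in the columns $b_c$ or $b_d$ of $M^F$ (which lie outside $M^G$), treating separately the degenerate situations where $r^F$ is a singleton (so $F$ is a horizontal path) or where the relevant column is the first column of $M^F$ and lacks horizontal vertices. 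You have, in effect, skipped the harder half of the theorem.

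A secondary, smaller gap: in the remaining sub-case of $(i)$, namely $r^G<r^F$ with $\max(r^F)\in r^G$ (which forces the ``first difference'' alternative $a_i<b_i$ in the order on row sets), your plan to ``descend from the first row in which $M^F$ and $M^G$ begin to differ'' does not obviously produce the required vertex. The index $a_i$ is a row of $M^G$ but a \emph{column} of $M^F$ (since $b_{i-1}<a_i<b_i$ gives $a_i\notin r^F$), so the natural move is to work in the column $a_i$ of $M^F$, where $G$ has no vertices at all, and check that $F$ meets it in a vertex of $F^-$: a lone (horizontal) vertex there lies in $F^-$ because $a_i<b_i\le\max(r^F)$; otherwise the column contains either a left turning vertex or the terminal vertex $(\max(r^F),a_i)$ of the path, which is vertical with $i=\max(r^F)$. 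Spelling this out, and then writing the genuinely independent argument for $(ii)$, is what remains to be done.
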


\begin{proof}
\textbf{Step 1.} First we prove that for every facet $F$ and for $v=(i,j) \in F$ we have one of the following three cases:
\begin{enumerate}[(a)]
\item $v \in F^* \iff$ the only facet that contains $F \smallsetminus \{v\}$ is $F,$
\item $v \in F^+ \iff$ there exists another facet $G \supset F \smallsetminus \{v\}$ such that $G>F,$
\item $v \in F^- \iff$ there exists another facet $G \supset F \smallsetminus \{v\}$ such that $G<F.$
\end{enumerate}
Notice that is suffices to prove the directions $"\hspace{-.25cm} \implies \hspace{-.25cm}"$ since the three conditions on the left are mutually exclusive and so are the ones on the right. $(\text{a})$ is clear from Remark \ref{Boundary faces}. If $v \notin F^*,$ there exists another facet $G$ such that $G \neq F$ and $F \smallsetminus \{v\} \subset G.$\\
Let $v \in F^+.$ If $v$ is a right turning vertex, then $G$ is obtained by flipping $v$ and of course $G>F.$ If $v$ is an horizontal vertex, then $r^G = r^F \cup \{j\}$ and, since $j > \max(r^F),$ we have $r^G > r^F$ (and then $G>F$). If $v$ is a verical vertex, then $r^G = r^F \smallsetminus \{i\}$ and, since $i < \max(r^F),$ we have again $G>F.$\\
Let $v \in F^-.$ If $v$ is a left turning vertex, then $G$ is obtained by flipping $v$ and this time $G<F.$ If $v$ is an horizontal vertex, then $r^G = r^F \cup \{j\}$ and, since $j < \max(r^F),$ we have $G<F.$ If $v$ is a verical vertex, then $r^G = r^F \smallsetminus \{i\}$ and, since $i = \max(r^F),$ we have $G<F.$\\

\textbf{Step 2.} We prove that, for every pair of facets $G<F,$ there exists $v \in F^-$ such that $v \notin G.$\\
If $M^G=M^F,$ then at least one of the left turning vertices of $F$ must not be in $G,$ since if we consider the set of its left turning vertices, $F$ is the smallest facet in $M^F$ that contains this set.\\
If $M^G \neq M^F,$ then let $r^G = \{a_1,\dots,a_c\}$ and $r^F = \{b_1,\dots,b_d\}$ with $a_1 < \dots < a_c$ and $b_1 < \dots < b_d.$ Since $G<F,$ we must have that $r^G < r^F.$\\
If $a_i<b_i$ for some $i$ and $a_j=b_j$ for all $j<i,$ then the vertices in the column $a_i$ are not in $G,$ but one of those vertices is in $F.$ It suffices to find a vertex in this column that belongs to $F^-.$ If $F$ has only one vertex in the column $a_i$ we are done since $a_i<b_i \leq \max(r^F).$ If $F$ has a left turning vertex in the column $a_i$ we are done. Otherwise $a_i$ is the last column of $M^F$ and that column has at least two vertices; then $v=(\max(r^F),a_i)$, the last vertex of $F$, is what we are looking for.\\
If $c<d$ and $a_i=b_i$ for all $i \in [c],$ then the vertices in the row $b_d$ are not in $G$ but some vertex in the row $b_d$ is in $F$ and it sufficies to show that one vertex in this row belongs to $F^-.$ Since $d>1,$ $b_d$ is the last of at least two rows of $G$ and in this row we have either a vertical vertex or a left turning vertex.\\

\textbf{Step 3.} We prove that for every pair of facets $G>F,$ there exists $v \in F^+$ such that $v \notin G.$\\
Similarly to the latter step, if $M^F=M^G$ we are done by taking a right turning vertex of $F$ which is not in $G$. If $M^F \neq M^G$, we consider $r^F = \{a_1,\dots,a_c\}$ and $r^G = \{b_1,\dots,b_d\}$, with $a_1 < \dots < a_c$ and $b_1 < \dots < b_d$ such that $r^G > r^F.$\\
If $a_i<b_i$ for some $i$ and $a_j=b_j$ for all $j<i,$ then the row $a_i$ is in $M^F$ but not in $M^G;$ so let us see what can appen in the row $a_i$ of $M^F.$ If there is a right turning vertex we are done. If $a_i<\max(r^F)$ (i.e. $i<c$) and there is a vertical vertex we are done. Thus we can assume that $i=c$ and in the row $a_c$ there is no right turning vertex. If $c=1,$ then $r^F = \{a_1\}$ and $F$ is an horizontal path, so we have that $v=(a_1,b_1) \in F^+$ (notice that $\max(r^F) = a_1 < b_1 \leq m,$). If $c>1,$ then if in the column $a_c$ there is either a vertical vertex or an horizontal one $(a_c,k)$ such that $a_c<k \leq m,$ we have that this vertex belongs to $F^+.$ Otherwise in the row $a_c$ there are no vertices of $F^+$, but we can look for some vertex in the column $b_c$ (which stays in $M^F$ but not in $M^G$). If in this column there is a right turning or an horizontal vertex we are done, otherwise $b_c$ is the first column of $M^F$ and it lacks of horizontal vertices, but in this case $v=(a_1,b_c)$ is a vertical vertex and belongs to $F^+$.\\
If $c<d$ and $a_i=b_i$ for all $i \in [c],$ then the column $b_d$ is in $M^F$ but not in $M^G$ and we want to prove that this column has some vertex in $F^+.$ If in the column $b_d$ there is an horizontal or a right turning vertex we are done. Otherwise $b_d$ is the first column in $M^F$ and the first step of $F$ is vertical (in particular $c>1$). We have already noticed that in this case $v=(a_1,b_d) \in F^+$.\\

\textbf{Step 4.} Conclusion of the proof.\\
To colclude, notice that Step 1 proves $\supseteq$ in $(i)$ and $(ii),$ Step 2 proves $\subseteq$ in $(i)$ and Step 3 proves $\subseteq$ in $(ii).$\\
\end{proof}

\begin{example}
We resume the Example \ref{Delta 3,4}. The followings are the facets of $\Delta$ in ascending order.

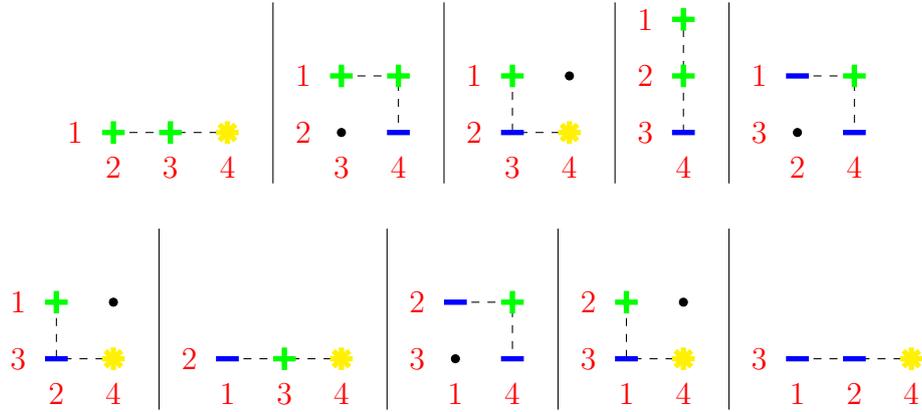
\begin{figure}[h]
\begin{center}
\begin{tikzpicture}[scale=1.5]
\node [black,above,text=red] at (0,-.5) {$2$};
\node [black,above,text=red] at (.5,-.5) {$3$};
\node [black,above,text=red] at (1,-.5) {$4$};
\node [black,right,text=red] at (-.5,0) {$1$};
\draw [dashed] (0,0)--(.5,0)--(1,0);
\draw (0,0) node[cross=5pt,rotate=45,line width=2pt,green] {};
\draw (.5,0) node[cross=5pt,rotate=45,line width=2pt,green] {};
\draw (1,0) node[cross=5pt,rotate=45,line width=2pt,yellow] {};
\draw (1,0) node[cross=5pt,line width=2pt,yellow] {};
\draw (1.4,-.45)--(1.4,1.15);

\node [black,above,text=red] at (2,-.5) {$3$};
\node [black,above,text=red] at (2.5,-.5) {$4$};
\node [black,right,text=red] at (1.5,0) {$2$};
\node [black,right,text=red] at (1.5,.5) {$1$};
\draw [dashed] (2,.5)--(2.5,.5)--(2.5,0);
\draw [line width=2pt,blue] (2.4,0)--(2.6,0);
\draw [fill=black] (2,0) circle (1pt);
\draw (2,.5) node[cross=5pt,rotate=45,line width=2pt,green] {};
\draw (2.5,.5) node[cross=5pt,rotate=45,line width=2pt,green] {};
\draw (2.9,-.45)--(2.9,1.15);

\node [black,above,text=red] at (3.5,-.5) {$3$};
\node [black,above,text=red] at (4,-.5) {$4$};
\node [black,right,text=red] at (3,0) {$2$};
\node [black,right,text=red] at (3,.5) {$1$};
\draw [dashed] (3.5,.5)--(3.5,0)--(4,0);
\draw (3.5,.5) node[cross=5pt,rotate=45,line width=2pt,green] {};
\draw [line width=2pt,blue] (3.4,0)--(3.6,0);
\draw [fill=black] (4,.5) circle (1pt);
\draw (4,0) node[cross=5pt,rotate=45,line width=2pt,yellow] {};
\draw (4,0) node[cross=5pt,line width=2pt,yellow] {};
\draw (4.4,-.45)--(4.4,1.15);

\node [black,above,text=red] at (5,-.5) {$4$};
\node [black,right,text=red] at (4.5,0) {$3$};
\node [black,right,text=red] at (4.5,.5) {$2$};
\node [black,right,text=red] at (4.5,1) {$1$};
\draw [dashed] (5,0)--(5,.5)--(5,1);
\draw [line width=2pt,blue] (4.9,0)--(5.1,0);
\draw (5,.5) node[cross=5pt,rotate=45,line width=2pt,green] {};
\draw (5,1) node[cross=5pt,rotate=45,line width=2pt,green] {};
\draw (5.4,-.45)--(5.4,1.15);

\node [black,above,text=red] at (6,-.5) {$2$};
\node [black,above,text=red] at (6.5,-.5) {$4$};
\node [black,right,text=red] at (5.5,0) {$3$};
\node [black,right,text=red] at (5.5,.5) {$1$};
\draw [dashed] (6,.5)--(6.5,.5)--(6.5,0);
\draw [line width=2pt,blue] (5.9,.5)--(6.1,.5);
\draw [line width=2pt,blue] (6.4,0)--(6.6,0);
\draw [fill=black] (6,0) circle (1pt);
\draw (6.5,.5) node[cross=5pt,rotate=45,line width=2pt,green] {};

\node [black,above,text=red] at (-.5,-2.5) {$2$};
\node [black,above,text=red] at (0,-2.5) {$4$};
\node [black,right,text=red] at (-1,-2) {$3$};
\node [black,right,text=red] at (-1,-1.5) {$1$};
\draw [dashed] (-.5,-1.5)--(-.5,-2)--(0,-2);
\draw [line width=2pt,blue] (-.6,-2)--(-.4,-2);
\draw [fill=black] (0,-1.5) circle (1pt);
\draw (-.5,-1.5) node[cross=5pt,rotate=45,line width=2pt,green] {};
\draw (0,-2) node[cross=5pt,rotate=45,line width=2pt,yellow] {};
\draw (0,-2) node[cross=5pt,line width=2pt,yellow] {};
\draw (.4,-2.45)--(.4,-.85);

\node [black,above,text=red] at (1,-2.5) {$1$};
\node [black,above,text=red] at (1.5,-2.5) {$3$};
\node [black,above,text=red] at (2,-2.5) {$4$};
\node [black,right,text=red] at (.5,-2) {$2$};
\draw [dashed] (1,-2)--(1.5,-2)--(2,-2);
\draw [line width=2pt,blue] (.9,-2)--(1.1,-2);
\draw (1.5,-2) node[cross=5pt,rotate=45,line width=2pt,green] {};
\draw (2,-2) node[cross=5pt,rotate=45,line width=2pt,yellow] {};
\draw (2,-2) node[cross=5pt,line width=2pt,yellow] {};
\draw (2.4,-2.45)--(2.4,-.85);

\node [black,above,text=red] at (3,-2.5) {$1$};
\node [black,above,text=red] at (3.5,-2.5) {$4$};
\node [black,right,text=red] at (2.5,-2) {$3$};
\node [black,right,text=red] at (2.5,-1.5) {$2$};
\draw [dashed] (3,-1.5)--(3.5,-1.5)--(3.5,-2);
\draw [line width=2pt,blue] (2.9,-1.5)--(3.1,-1.5);
\draw [line width=2pt,blue] (3.4,-2)--(3.6,-2);
\draw [fill=black] (3,-2) circle (1pt);
\draw (3.5,-1.5) node[cross=5pt,rotate=45,line width=2pt,green] {};
\draw (3.9,-2.45)--(3.9,-.85);

\node [black,above,text=red] at (4.5,-2.5) {$1$};
\node [black,above,text=red] at (5,-2.5) {$4$};
\node [black,right,text=red] at (4,-2) {$3$};
\node [black,right,text=red] at (4,-1.5) {$2$};
\draw [dashed] (4.5,-1.5)--(4.5,-2)--(5,-2);
\draw [line width=2pt,blue] (4.4,-2)--(4.6,-2);
\draw [fill=black] (5,-1.5) circle (1pt);
\draw (4.5,-1.5) node[cross=5pt,rotate=45,line width=2pt,green] {};
\draw (5,-2) node[cross=5pt,rotate=45,line width=2pt,yellow] {};
\draw (5,-2) node[cross=5pt,line width=2pt,yellow] {};
\draw (5.4,-2.45)--(5.4,-.85);

\node [black,above,text=red] at (6,-2.5) {$1$};
\node [black,above,text=red] at (6.5,-2.5) {$2$};
\node [black,above,text=red] at (7,-2.5) {$4$};
\node [black,right,text=red] at (5.5,-2) {$3$};
\draw [dashed] (6,-2)--(6.5,-2)--(7,-2);
\draw [line width=2pt,blue] (5.9,-2)--(6.1,-2);
\draw [line width=2pt,blue] (6.4,-2)--(6.6,-2);
\draw (7,-2) node[cross=5pt,rotate=45,line width=2pt,yellow] {};
\draw (7,-2) node[cross=5pt,line width=2pt,yellow] {};

\end{tikzpicture}
\caption{Case $m=3,$ $n=4;$ the facets of $\Delta.$}
\end{center}

\end{figure}

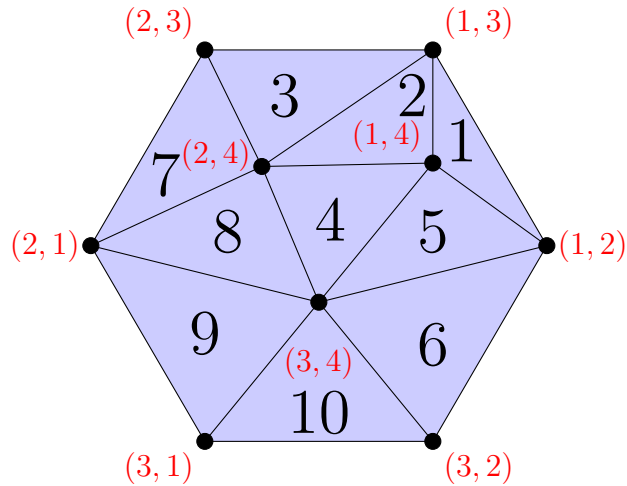
\begin{figure}[h]
\begin{center}
\begin{tikzpicture}[scale=1.5]
\node [black,left,red] at (0,0) {$(2,1)$};
\node [black,below left,red] at (1,-1.73) {$(3,1)$};
\node [black,below right,red] at (3,-1.73) {$(3,2)$};
\node [black,right,red] at (4,0) {$(1,2)$};
\node [black,above right,red] at (3,1.73) {$(1,3)$};
\node [black,above left,red] at (1,1.73) {$(2,3)$};
\draw [fill=blue!20] (0,0)--(1,-1.73)--(3,-1.73)--(4,0)--(3,1.73)--(1,1.73)--cycle;
\node [black,above left,red] at (3,.73) {$(1,4)$};
\node [black,left,red] at (1.5,.8) {$(2,4)$};
\node [black,below,red] at (2,-.8) {$(3,4)$};
\draw (4,0)--(2,-.5)--(3,-1.73);
\draw (1,-1.73)--(2,-.5)--(0,0);
\draw (0,0)--(1.5,.7)--(1,1.73);
\draw (1.5,.7)--(3,1.73)--(3,.73)--(4,0);
\draw (2,-.5)--(1.5,.7)--(3,.73)--(2,-.5);
\draw [fill=black] (0,0) circle (2pt);
\draw [fill=black] (1,-1.73) circle (2pt);
\draw [fill=black] (3,-1.73) circle (2pt);
\draw [fill=black] (4,0) circle (2pt);
\draw [fill=black] (3,1.73) circle (2pt);
\draw [fill=black] (1,1.73) circle (2pt);
\draw [fill=black] (1.5,.7) circle (2pt);
\draw [fill=black] (2,-.5) circle (2pt);
\draw [fill=black] (3,.73) circle (2pt);
\node [black,below,scale=2] at (2,-1.1) {$10$};
\node [black,below,scale=2] at (1,-.4) {$9$};
\node [black,below,scale=2] at (1.2,.5) {$8$};
\node [black,below,scale=2] at (2.1,.6) {$4$};
\node [black,below,scale=2] at (3,-.5) {$6$};
\node [black,below,scale=2] at (3,.5) {$5$};
\node [black,below,scale=2] at (.65,1) {$7$};
\node [black,below,scale=2] at (1.7,1.7) {$3$};
\node [black,below,scale=2] at (2.82,1.7) {$2$};
\node [black,below,scale=2] at (3.25,1.3) {$1$};
\end{tikzpicture}
\caption{Case $m=3,$ $n=4;$ a two-way shelling order for $\Delta.$}
\end{center}
\end{figure}

\end{example}

$\vspace{6cm}$

%\begin{notation}[Formula]
%Let $f_i(\Delta) \coloneqq \{\sigma \in \Delta : \dm(\sigma)=i\}.$ Then we have, for $i \in \{0,\dots,n-2\}:$
%$$|f_i(\Delta)| = \sum_c \sum_r \binom{m}{r} \binom{n-r}{c} \frac{i!}{(r+c-i-2)!(i+1-c)!(i+1-r)!},$$
%$c \in \{\max(i-m+2,1),\dots,i+1\}$ and $r \in \{i+2-c,\dots,\min(m,n-c,i+1)\}.$
%\end{notation}

%Al posto di sfera+simplesso basta strong-quasimanifold+simplesso per essere Gorenstein

\bibliographystyle{acm}
\bibliography{AJME.bib}

\end{document}